\let\vec\mathbf
\newtheorem{theorem}{Theorem}
\title{The effect of linear dispersive errors \\ on nonlinear timestepping accuracy in the  \\ $f$-plane rotating shallow water equations}
\author{Timothy C. Andrews, Jemma Shipton, Beth A. Wingate}
\begin{document}

\maketitle

\begin{abstract}
For simulations of time evolution problems, such as weather and climate models, taking the largest stable timestep is advantageous for reducing wall-clock time. A drawback of doing so is the potential reduction in nonlinear accuracy of the numerical solution --- we investigate this for the Rotating Shallow Water Equations (RSWEs) on an $f$-plane. First, we examine how linear dispersion errors can impact the nonlinear dynamics. By deriving an alternate time evolution equation for the RSWEs, the dynamics can be expressed through interactions of three linear waves in triads. Linear dispersion errors may appear in the numerical representation of the frequency of each triad, which will impact the timestepped nonlinear dynamics. A new triadic error quantifies this by composing three stability polynomials from the oscillatory Dahlquist test equation. Second, we design two new test cases to examine the effect of timestep size in a numerical model. These tests investigate how well a timestepper replicates slow nonlinear dynamics amidst fast linear oscillations. The first test case of a Gaussian height perturbation contains a nonlinear phase shift that can be missed with a large timestep. The second set of triadic test cases excite a few linear waves to instigate specific triadic interactions. Two triadic cases are examined: one with a dominant directly resonant triad and another with near-resonances that redistribute fast mode energy into rings in spectral space. Three numerical models, including LFRic from the Met Office, are examined in these test cases with different timesteppers.
%%%%
\end{abstract}

\section{Introduction}
As simulations of time evolution problems increase in spatial grid resolution and complexity, for example using data assimilation, the computational cost grows. This is particularly true for weather and climate simulations. For operational weather forecasting, the wall-clock time-to-solution is restricted so that timely, meaningful results can be produced such as daily weather forecasts. One way to meet these demands is to increase the computational power applied to a simulation. However, this is incompatible with today's many-core hardware improvements and the importance of sustainable computing practices \parencite{green_comp,lannelongue2021green}. Therefore, the next generation of weather and climate models are prioritising the reduction of their wall-clock time \parencite{Chasm}. One solution to reduce wall-clock times is to use the largest possible stable timestep that also retains sufficient accuracy.  \par
A key challenge when taking large timesteps in weather and climate models is the complexity of the governing equations, which exhibit multiscale dynamics. These equations typically contain fast, oscillatory, waves, which impose limits on the allowable timestep size of explicit methods. There can also be large variations in the timescales of the dynamical features. An example is the disparate wave speeds of Rossby, inertia-gravity, and acoustic waves in the compressible Euler equations that govern geophysical flows \parencite{Durran}. This multiscale complexity is retained in the simpler Rotating Shallow Water Equations (RSWEs), which makes these well-suited for analysing numerical discretisations \parencite{James_Kent_LFRic,Aechtner_wavelet,Ullrich_Jablonowski,Ullrich_Lauritzen} --- we consider the $f$-plane version of the RSWEs in this paper. \par
The balance between timestep size, stability and accuracy has inspired the investigation of many varieties of timestepping schemes for weather and climate models. These can be classified as explicit, implicit, and semi-implicit, and can be applied in different formulations of the equations, such as the Eulerian or semi-Lagrangian frames of reference. Each approach has its strengths and weaknesses when applied with a large timestep. Explicit timesteppers can be computationally cheap, because they reuse previously computed derivative information, but have strict stability limits from the linear oscillations and dissipation. Implicit methods may have larger stability regions, but can be limited to small timesteps to have sufficient accuracy. The lack of an obvious timestepping choice for enabling large, accurate, timesteps, is reflected by the range of methods used in dynamical cores across the world \parencite{dynamical_core_comparison}. \par
In this paper, we focus on the quantification of nonlinear timestepping error in numerical solutions to the $f$-plane RSWEs. This work is split into two distinct parts:
\begin{enumerate}[label=\Roman*]
    \item The first part (Sections \ref{section:RSWEs}, \ref{section:triad_interactions}, \ref{section:triad_errors}) will use the Fourier method to reformulate the RSWEs into a time-dependent ODE for spectral coefficients; this allows for the identification of nonlinear interactions. This ODE contains quadratically nonlinear terms, a coupling coefficient, and a triadic propagator term of $\mathcal{T} = \exp(i \Omega t)$ that depends on the triadic frequency, $\Omega$. In a novel use of linear dispersion theory, we quantify errors in $\Omega$ by composing stability polynomials from the oscillatory Dahlquist equation.
    \item The second part (Sections \ref{section:numerical_models}, \ref{section:gaussian_test_case}, \ref{section:triad_test_case}) will consider timestepped solution errors in two new test cases for the biperiodic $f$-plane RSWEs. These test cases exhibit slowly developing nonlinear dynamics alongside more rapid linear oscillations. We investigate how well timesteppers resolve these nonlinear features when used with a large timestep.
\end{enumerate}

In part I we will make use of linear dispersion analysis to construct a measure of timestepping error in nonlinear dynamics. From previous analyses of numerical schemes in simple model problems, such as Dahlquist-type equations \parencite{dahlquist1963special}, much has been understood (see \textcite{Durran}) about their behaviour; this is even though they are applied to the more complex and fully nonlinear equations that govern weather and climate dynamics. In a similar manner, we will investigate an aspect of nonlinear timestepping error by applying the Dahlquist test equation to the frequencies of the nonlinear interactions. Due to the bilinear form of the RSWEs nonlinearity, these nonlinear interactions appear at leading-order through combinations of three linear waves in \textit{triads}. \par
Triads are a well-examined phenomenon in fluid systems, e.g \parencite{craik1971non,bretherton1964resonant,Newell_rossby}, and have been studied experimentally \parencite{mcewan1971degeneration,bourget2013experimental}. Triad interactions have been shown to move a considerable amount of energy between wavenumbers and modes within numerical fluid models \parencite{Ward_Dewar, smith1999transfer} and are also posited to be responsible for generating longer timescale dynamics on a planetary scale \parencite{kartashova2007model,raupp2009resonant,raphaldini2022precession}. By noting their importance to numerical solutions of the $f$-plane RSWEs, we will examine the effect of linear wave errors on the dynamics of these triads. \par
The spectral evolution equation of Section \ref{section:triad_interactions} will highlight that the time evolution of triads in the RSWEs is governed by the triadic propagator of  $\mathcal{T} = \exp(i \Omega t)$. This term depends on the triadic frequency, $\Omega$, which governs the timescale of the triadic interaction. Direct- and near-resonant triads, with zero or small values of $\Omega$ respectively, lead to low-frequency dynamics, which are important in numerical simulations \parencite{chen2005resonant,Smith_Lee} and physical systems like the ocean \parencite{fu1981observations} over long time periods. Triads comprised of fast waves can still lead to a small $\Omega$ --- this means that rapid, linear, waves can construct slow timescale dynamics through the nonlinear interactions. Timestepping errors in the constituent wave frequencies will affect the discretised representation of the triad and the spectral evolution equation shows that this impacts the numerical approximation of the low-frequency dynamics. \par
We will introduce the triadic error as a new way to quantify the effect of linear dispersive errors on nonlinear RSWEs dynamics. In a Dahlquist analysis of the triadic frequency, the expected solution is a discretisation of the triadic propagator over one timestep, $\mathcal{T}_{\Delta t}$. When applying a timestepping method, an approximation of the triadic propagator ($\mathcal{T}_N$) can be constructed from stability polynomials for the individual linear waves. The triadic error then measures the discrepancy between $\mathcal{T}_N$ and $\mathcal{T}_{\Delta t}$. A key advantage of this metric is that it considers the effect of fast wave errors in slow dynamics, which can be missed when performing an analysis of solely the linear waves. \par
In part II we switch to investigating errors in full numerical models over a time interval, instead of one timestep. We achieve this by developing two new test cases to add to the existing repertoire for the RSWEs, e.g. \parencite{Williamson_tests,Galewsky_test,Zerrokaut_allen}. Test cases enable the examination of timesteppers when the spatial and temporal aspects of the model may be hard to separate, such as with Lagrangian-based timesteppers \parencite{Mengaldo_time_int}. Other papers have used test cases to rigorously analyse the effect of the spatial discretisation, such as \parencite{Staniforth_hor_grids,Weller_comp_modes}; we seek to enable similar studies over the time discretisation, like in \textcite{luan_exp_int_large_dt}. We will isolate the timestepping portion of the error by computing differences in timestepped solutions relative to a fine timestep reference solution generated within that model. This aims to fix the level of error from other components of the model, such as the spatial discretisation, and focus on the growth of timestepping error with increasing $\Delta t$. \par
Our test cases will focus on the effect of multiple timescales and important nonlinear interactions. The first test case is of a Gaussian height field perturbation, which contains a slow phase shift in its reformation due to the nonlinearity. The second test case type is triadic, where select linear waves are initialised to excite specific triadic interactions. This builds upon linear tests \parencite{Paldor_quantiative_test_case,Paldor_Matsuno}, as we let an initially linear solution evolve nonlinearly. Two variants of triadic test case highlight the importance of both direct- and near-resonant triads on the solution over long time periods. The first triadic case starts with two waves, with the nonlinear interactions constructing a third wave to complete a directly resonant triad. The second triadic case contains a cluster of slow waves, which enables an energy redistribution over rings in spectral space for the fast modes (\textit{q.v} Figure \ref{fig:TwoFSCent_waveno_space}).  \par
We now overview the layout of this work: in Section \ref{section:RSWEs} we present the dimensional RSWEs as well as a nondimensional \textit{standard equation} form with distinct scalings for the (fast) linear and (slower) nonlinear components. Part I will use this standard equation to derive the spectral evolution ODE for the RSWEs in Section \ref{section:triad_interactions}. From this, we will identify the triadic frequency and use the triadic error to compute errors in this term and the nonlinear dynamics. Section \ref{section:triad_errors} will compare triadic errors of classical timestepping methods and show how this differs from a linear dispersion analysis. In Section \ref{section:numerical_models}, we will switch to part II and overview the numerical models used in our new test cases. A spectral coefficient error metric is also described. Section \ref{section:gaussian_test_case} will then overview the Gaussian test case and Section \ref{section:triad_test_case} will present the triadic test cases. Lastly, we will conclude this paper and suggest future research directions in Section \ref{section:conclusions}. \par 

\section{The Rotating Shallow Water Equations (RSWEs)}
\label{section:RSWEs}
\subsection{Governing equations}

We consider the RSWEs on a two-dimensional, biperiodic square domain of $[0,L] \ \times \ [0,L]$ with flat topography. An $f$-plane (constant) approximation is used for the Coriolis parameter. The prognostic variables are a two-dimensional velocity ($\vec{u}$) and a measure of the elevation above the topography, such as the height of the fluid layer, $h$. A height perturbation ($\eta$) is used in the following formulation,
\begin{subequations}
\label{eq:rswes_dim}
\begin{align}
    \frac{\partial \vec{u}}{\partial t} + \left( \vec{u} \cdot \bm{\nabla} \right) \vec{u} + f \vec{\hat{k}} \times \vec{u} + g \bm{\nabla} \eta = 0, \label{eq:rswuv} \\
    \frac{\partial \eta}{\partial t} + \bm{\nabla} \cdot (\eta \vec{u}) + H_0 (\bm{\nabla} \cdot \vec{u}) = 0,
     \label{eq:rswh}
\end{align}
\end{subequations}

\noindent where $\bm{\nabla} = \left( \frac{\partial}{\partial x}, \frac{\partial}{\partial y} \right)^\text{T}$ is the two-dimensional gradient operator in the $x$-$y$ plane, $\hat{\vec{k}}$ is the unit vector in the positive $z$-direction, $f$ is the constant value of rotation, $g$ is the gravitational acceleration, and $H_0$ is the mean height of the fluid layer. Alternative measures of the elevation include a fluid depth of $D = H_0 +\eta$, which is used by Gusto and is equivalent to $h$ when there is no topography, or a geopotential of $\phi_L = g h$, as used by LFRic. \par
We will now symmetrise the RSWEs to compute the orthonormal eigenmodes (given in Appendix A). We introduce a geopotential of $\phi = \sqrt{g/H_0} ~\eta$ which ensures that the linear operator that is skew-symmetric. In the symmetrised equation set, we separate the linear and nonlinear terms to the left- and right-hand sides respectively:
\begin{subequations}
\label{eq:rswes_skew}
\begin{align}
    \frac{\partial \vec{u}}{\partial t}  + f \vec{\hat{k}} \times \vec{u} + c \bm{\nabla} \phi = - \left( \vec{u} \cdot \nabla \right) \vec{u}, \label{eq:rswuv_skew} \\
    \frac{\partial \phi}{\partial t}  + c (\bm{\nabla} \cdot \vec{u}) = - \bm{\nabla} \cdot (\phi \vec{u}),   
\label{eq:rswh_skew}
\end{align}
\end{subequations}

\noindent where $c = \sqrt{g H_0}$ is the irrotational, linear, wave frequency. The RSWEs contain three distinct eigenmodes corresponding to oscillatory dynamics, $\exp\left(i \omega_{\vec{k}}^{\alpha} t\right)$, where $\omega_{\vec{k}}^{\alpha}$ are the linear wave frequencies. These frequencies are determined from the (purely real) dispersion relation of

\begin{equation} 
\omega_{\vec{k}}^{\alpha} = \alpha \sqrt{f^2 + c^2 K^2}, ~\alpha \in \{ -1, 0, +1 \},
\label{eq:rswe_disp_rel}
\end{equation}

\noindent where $\vec{k} = [k,l]^\text{T}$ is the wavenumber vector in the $x$-$y$ plane and $K = \sqrt{k^2 + l^2}$ is the total wavenumber. The parameter $\alpha$ specifies the branch of the dispersion relation: $\alpha = 0$ corresponds to a slow mode, and $\alpha = \pm 1$ correspond to two fast inertia-gravity modes which propagate in opposite directions. The corresponding eigenvectors for these modes, $\vec{r}_{\vec{k}}^{\alpha} = \left[ ru_{\vec{k}}^{\alpha}, {rv}_{\vec{k}}^{\alpha}, r \phi_{\vec{k}}^{\alpha} \right]^\text{T}$, are given in Appendix A.

\subsection{A standard form containing a separation of timescales}
\label{subsec:RSWEs_effect_of_eps}
We now rewrite the RSWEs in what we will call the standard form. This representation highlights the presence of multiple timescales, which is a typical feature of weather and climate PDEs. Specifically, the standard form identifies a difference in the dominant timescale of the linear and nonlinear terms:
\begin{equation}
\label{eq:standard-1}
\frac{\partial \vec{U}}{\partial t} + \frac{1}{\epsilon} L \vec{U} = \mathcal{N}(\vec{U}) , ~\epsilon \in \mathbb{R}, ~\epsilon \neq 0,
\end{equation}

\noindent where $\vec{U}(t)$, is a (spatial) vector valued function $\vec{U}(t) = (U_1(t,\cdot),U_2(t,\cdot),...)$, $L$ has purely imaginary or zero eigenvalues, and $\mathcal{N}(\vec{U})$ is a nonlinear term. In practice, there could be dissipation, but as we focus on how the nonlinearity engages with the oscillatory waves, we do not include it here. The parameter $\epsilon$ in equation (\ref{eq:standard-1}) quantifies the frequency of the oscillations of the linear term. When $\epsilon$ is small, we expect fast oscillations, when $\epsilon \sim \mathcal{O}(1)$ the oscillations are slower. A regime with $\epsilon < 1$ in the RSWEs is often referred to as a state of timescale separation. 
\par
We can nondimensionalise the symmetrical RSWEs to obtain a standard form representation with an explicit separation between the timescales of $\epsilon^{-1} L \sim \mathcal{O}(\epsilon^{-1})$ and $\mathcal{N} \sim \mathcal{O}(1)$. Consider nondimensional quantities (denoted with a bar) of $\vec{u} = \mathtt{U} \vec{\bar{u}}, ~\vec{x} = \mathtt{L} \vec{\bar{x}}, ~t = \mathtt{T} \bar{t}, ~\phi= \mathtt{U} \bar{\phi}$, with $\mathtt{U}, \mathtt{L}, \mathtt{T}$ denoting characteristic velocity, length, and time scales, respectively. Using these in the symmetrised equations (\ref{eq:rswes_skew}) and letting the nondimensional time evolve on an advective scale ($\mathtt{T} = \mathtt{L}/ \mathtt{U}$) leaves:
\begin{subequations}
\begin{align}
    \frac{\partial \bar{\vec{u}}}{\partial \bar{t}}  + \frac{f \mathtt{L}}{\mathtt{U}} \vec{\hat{k}} \times \bar{\vec{u}} + \frac{c}{\mathtt{U}} \bar{\bm{\nabla}} \bar{\phi} &= - \left( \bar{\vec{u}} \cdot \bar{\bm{\nabla}} \right) \bar{\vec{u}} , \\
    \frac{\partial \bar{\phi}}{\partial \bar{t}}  + \frac{c}{\mathtt{U}} \left( \bar{\bm{\nabla}} \cdot \bar{\vec{u}} \right) &= - \bar{\bm{\nabla}} \cdot \left( \bar{\phi} \bar{\vec{u}} \right).
\end{align}
\label{eq:nonD_with_scales}
\end{subequations}

\noindent We now introduce two important nondimensional quantities of the Rossby number ($Ro$) and Froude number ($Fr$):
\begin{equation}
    Ro = \frac{\mathtt{U}}{f\mathtt{L}}, ~Fr = \frac{\mathtt{U}}{c}.
    \label{eq:nonD_numbers}
\end{equation}

\noindent The Rossby number measures the strength of the rotation and the Froude number measures the strength of the inertia-gravity force. We identify these quantities in (\ref{eq:nonD_with_scales}):

\begin{subequations}
\begin{align}
    \frac{\partial \bar{\vec{u}}}{\partial \bar{t}}  + \frac{1}{Ro} \vec{\hat{k}} \times \bar{\vec{u}} + \frac{1}{Fr} \bar{\bm{\nabla}} \bar{\phi} &= - \left( \bar{\vec{u}} \cdot \bar{\bm{\nabla}} \right) \bar{\vec{u}} , \\
    \frac{\partial \bar{\phi}}{\partial \bar{t}}  + \frac{1}{Fr} \left( \bar{\bm{\nabla}} \cdot \bar{\vec{u}} \right) &= - \bar{\bm{\nabla}} \cdot \left( \bar{\phi} \bar{\vec{u}} \right).
\end{align}
\label{eq:RSWEs_with_Ro_Fr}
\end{subequations}

\noindent When we take $Ro = Fr = \epsilon$, there is a single timescale for the linear terms and the RSWEs are in the standard form of (\ref{eq:standard-1}). This form will be used to derive the spectral evolution equation in part I of this work. For the test cases in part II, the numerical models use dimensional versions of the RSWEs, like (\ref{eq:rswes_dim}). To ensure a separation of timescales between fast oscillations and slower nonlinear interactions in this case, we choose the model parameters such that $Ro$ and $Fr$ are both small, although we do not require that they are equal.
\par
The size of $\epsilon$ has two important effects on numerical solutions to (\ref{eq:standard-1}). The first is that the $\epsilon^{-1} L$ term induces oscillations on the timescale of $\mathcal{O}(\epsilon)$, which require small timesteps if explicit numerical integrators are used. Even implicit methods, which can stably take larger timesteps, require small timesteps if accuracy is required. Second, the nonlinearity involves interactions of these faster scale oscillations that can also create low-frequency (slow) solutions of the PDE \parencite{Schochet,Embid_1996,SandersVerhulst}. \par
We can see these effects in the RSWEs through a simple one-dimensional example, obtained by setting $y$ derivatives to zero in the nondimensional RSWEs (\ref{eq:RSWEs_with_Ro_Fr}) and taking $Ro=Fr=\epsilon$. Consider an initial rest state with a Gaussian perturbation to the geopotential:
\begin{equation}
    u(x,0) = 0, ~v(x,0) = 0, ~\phi(x,0) = \exp \left( \frac{-(x-\pi)^2}{2} \right).
    \label{eq:oneD_gaussian_IC}
\end{equation}

\noindent This initialises dynamics of the Gaussian perturbation dispersing and reforming in a relatively periodic fashion. Figure \ref{fig:effectofepsongaussian} plots the geopotential height field for different linear wave speeds of $\epsilon \in \{ 1, 0.1, 0.01 \}$. The reduction of $\epsilon$ from left to right leads to faster oscillations and more reformations of the Gaussian over the same period of time. This is a consequence of the fast wave frequencies (\ref{eq:rswe_disp_rel}) becoming $\omega = \epsilon^{-1} \sqrt{1 + K^2}$ in the standard form system. In Figure 2 we compare the geopotential in the full RSWEs compared to a linear version, which is obtained by setting the right-hand side terms in (\ref{eq:RSWEs_with_Ro_Fr}) to zero. An important effect of the nonlinearity is a phase change over this duration --- in the time that nine reformations occur for the linear system, there are ten in the fully nonlinear case. Hence, the nonlinearity constructs important dynamics over a longer timescale than the linear oscillations. This begs the question of whether a timestepper can correctly identify this low-frequency nonlinear phase shift, amidst the fast oscillations, when used with a large timestep. We revisit this idea in Section \ref{section:gaussian_test_case}, where we will create a test case using a two-dimensional version of this Gaussian. \par

\begin{figure}
    \centering
     \includegraphics[scale = 0.8]{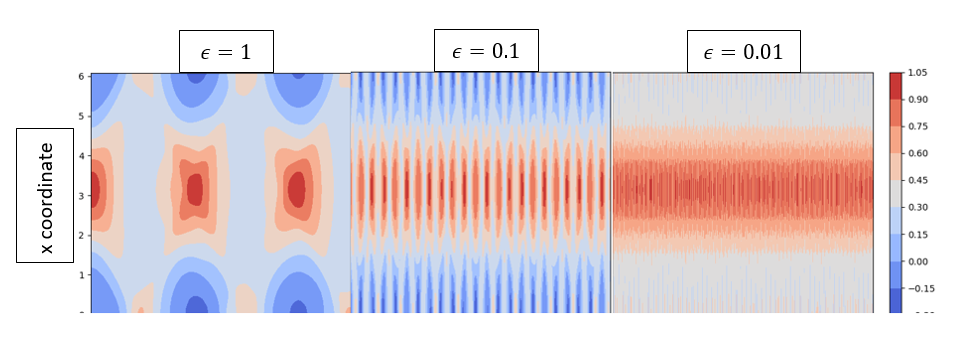}
    \caption[Gaussian reformation with epsilon]{The height field from the Gaussian initialisation of the one-dimensional RSWEs, as a function of $\epsilon$. From left to right, the regimes of $\epsilon \in \{1, 0.1, 0.01 \}$ lead to more reformations of the Gaussian height field over the same time interval of $T = 10$. This corresponds to significantly larger timestep restrictions for explicit timesteppers.}
    \label{fig:effectofepsongaussian}
\end{figure}

\begin{figure}
    \centering
     \includegraphics[scale = 0.7]{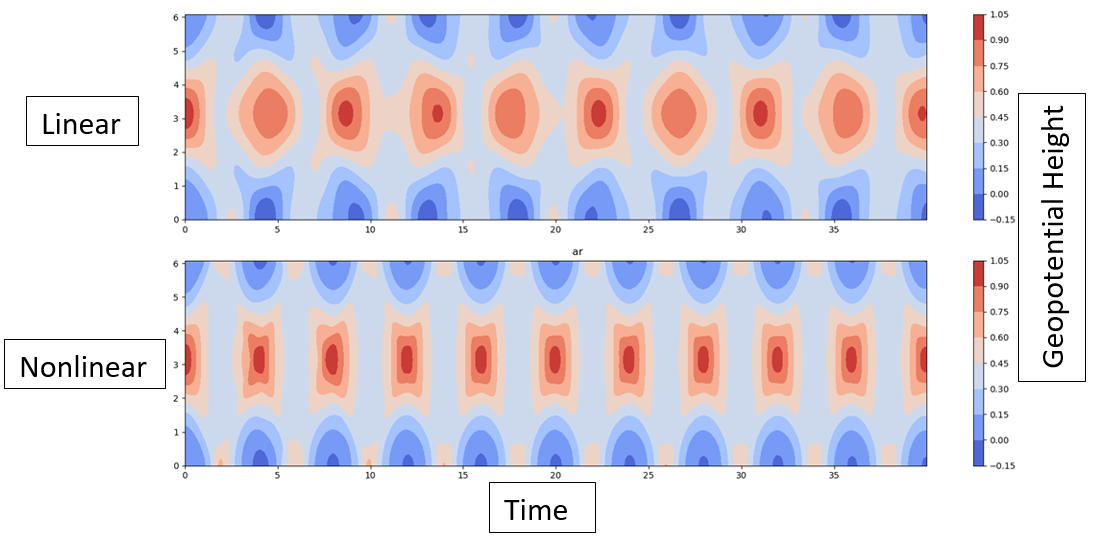}
    \caption[Gaussian reformation for linear and nonlinear RSWEs]{The height field of a reforming Gaussian perturbation, in linear (top) and nonlinear (bottom) versions of the one-dimensional RSWEs. The linear system has nine reformations of the Gaussian in the time that the nonlinear case has ten. }
    \label{fig:Gauss_lin_nonlin}
\end{figure}

\section{Triadic interactions}
\label{section:triad_interactions}

To better understand the construction of low-frequency dynamics in the RSWEs, this section reformulates the standard form PDE (\ref{eq:standard-1}) in a new basis used by \textcite{Schochet,Embid_1996,SandersVerhulst}. The resulting \textit{spectral evolution equation} represents the nonlinear time evolution of the $f$-plane RSWEs as the result of triadic interactions. Specifically, this elucidates the role of linear waves within nonlinear dynamics, and hence how linear dispersion errors can impact nonlinear accuracy. We now provide a derivation of the spectral evolution equation; see \textcite{Majda2002,Adam} for other derivations.

\subsection{Nonlinear interactions as linear combinations of linear wave frequencies}
\label{sec:mapandtriad}
We begin by defining a mapping to transform the RSWEs into a form with more time-regularity than our standard PDE (\ref{eq:standard-1}) with its separation of timescales. This is a conventional first step in the method of averaging \parencite{SandersVerhulst} and in the study of fast singular limits \parencite{Schochet,Embid_1998}. For notational convenience, we absorb $\epsilon$ into the linear term as $\mathcal{L} = \epsilon^{-1} L$: this means that $\mathcal{L} \sim \mathcal{O}(\epsilon^{-1})$ contains fast oscillations. We then define a mapping to a \textit{modulation variable}, $\vec{V}$,
\begin{equation}
\vec{U}(\vec{x},t) = e^{-t \mathcal{L}} \vec{V}(\vec{x},t),
\label{eq:map}
\end{equation}

\noindent where $\vec{x}$ denotes the spatial domain. We will compute the exponential operator in (\ref{eq:map}) using the Fourier series but this operator can also be computed in non-Fourier domains \parencite{nineteen_dubious_ways,Haut_Babb}. This $\exp(-t \mathcal{L})$ term is similar to that used in exponentially time differenced (ETD) schemes \parencite{Pope_exp_ints,cox_mathews} but is a continuous mapping instead of being discretised over a single timestep. \par
Applying the transformed variable, $\vec{V}$, in the standard equation (\ref{eq:standard-1}) leads to a different system to timestep over,
\begin{equation}
\frac{\partial \vec{V}}{\partial t} = e^{t \mathcal{L}} \mathcal{N} \left( e^{-t \mathcal{L}}\vec{V},e^{-t \mathcal{L}}\vec{V} \right).
\label{eq:modv}
\end{equation}

\noindent This new equation is termed the modulation equation, as it no longer has oscillations on $O(\epsilon)$. We will consider (\ref{eq:modv}) with a bilinear $\mathcal{N}$, as is the case with the RSWEs. \par
To identify the effect of nonlinear interactions in the modulation equation (\ref{eq:modv}) we consider a spectral space formulation. This uses Fourier expansions, which is made possible due to the periodicity of the domain. We express the modulation variable in spectral space as a Fourier expansion that is summed over all wavenumbers and modes ($\alpha $) across the two-dimensional plane $\vec{x} = [x,y]^\text{T}$,
\begin{equation}
\vec{V}(\vec{x},t) = \sum_{\vec{k} \in \mathbb{Z}^2} \sum_{\alpha \in \{-1, 0, +1 \} } \sigma_{\vec{k}}^{\alpha}(t) e^{i(\vec{k} \cdot \vec{x})} \vec{r}_{\vec{k}}^{\alpha},
\label{eq:spec_v}
\end{equation}

\noindent where $\sigma_{\vec{k}}^{\alpha}(t)$ are time-varying spectral coefficients and $\exp(i(\vec{k} \cdot \vec{x})) \vec{r}_{\vec{k}}^{\alpha}$ are eigenfunctions of the skew-Hermitian operator, $\mathcal{L}$, for the biperiodic $f$-plane. These eigenfunctions are given explicitly for the dimensional RSWEs in Appendix A. \par
To form the left-hand side of (\ref{eq:modv}) in spectral space, we take the time derivative of $\vec{V}$, which only affects the spectral coefficients:
\begin{equation}
    \frac{\partial \vec{V}}{\partial t} = \sum_{\vec{k} \in \mathbb{Z}^2} \sum_{\alpha \in \{-1, 0, +1 \} } \frac{d \sigma_{\vec{k}}^{\alpha}(t)}{dt}  
 e^{i(\vec{k} \cdot \vec{x})} \vec{r}_{\vec{k}}^{\alpha}.
 \label{eq:spec_deriv_lhs}
\end{equation}

\noexpand For the right-hand side of (\ref{eq:modv}) in spectral space, we use the Fourier expansion of (\ref{eq:spec_v}) and the eigenfrequencies ($\omega_{\vec{k}}^{\alpha}$) of the linear operator (\ref{eq:rswe_disp_rel}), for a representation of the $\exp(-t \mathcal{L}) \vec{V}$ terms in the bilinearity, 
\begin{align}
    e^{-t \mathcal{L}} \vec{V}(\vec{x},t) &= \sum_{\vec{k} \in \mathbb{Z}^2} \sum_{\alpha \in \{ -1, 0, +1 \}} \sigma_{\vec{k}}^{\alpha}(t) e^{i((\vec{k} \cdot \vec{x}) - \omega_{\vec{k}}^{\alpha} t) } \vec{r}_{\vec{k}}^{\alpha},
     \label{eq:spec_e_tL_v} \\
     \mathcal{N}(e^{-t \mathcal{L}}\vec{V},e^{-t \mathcal{L}}\vec{V}) &= \sum_{\vec{k}_a, \vec{k_b} \in \mathbb{Z}^2}\sum_{\alpha_a, \alpha_b \in \{-1, 0 +1 \}} \sigma_{\vec{k}_a}^{\alpha_a}(t) \sigma_{\vec{k_b}}^{\alpha_b}(t) e^{i ~\left( \vec{k}_a \cdot \vec{x} + \vec{k_b} \cdot \vec{x} - \left( \omega_{\vec{k}_a}^{\alpha_a} + \omega_{\vec{k_b}}^{\alpha_b} \right) t \right) } \mathcal{N} \left( \vec{r}_{\vec{k}_a}^{\alpha_a},\vec{r}_{\vec{k_b}}^{\alpha_b} \right), \label{eq:N_expansion}
\end{align}

\noindent where $\mathcal{N}(\vec{r}_{\vec{k}_a}^{\alpha_a},\vec{r}_{\vec{k_b}}^{\alpha_b})$ is a bilinear function of the eigenvectors. We project this expression of the nonlinearity back onto the eigenbasis of the RSWEs to identify interactions between waves. This uses the complex inner product, defined as $\left< \vec{a}, \vec{b} \right> = \vec{a} \cdot \vec{b}^*$, with $\cdot$ representing the dot product and $^*$ denoting complex conjugation. Computing $\left< (\ref{eq:N_expansion}), \vec{r}_{\vec{k}}^{\alpha} \right>$:

\begin{equation}
\begin{split}
\label{eq:N_projected}
\mathcal{N}\left(e^{-t \mathcal{L}}\vec{V},e^{-t \mathcal{L}}\vec{V}\right) =& \\
\sum_{\vec{k} \in \mathbb{Z}^2} \sum_{\alpha \in \{-1, 0, +1 \} } 
\sum_{\vec{k}_a, \vec{k_b} \in \mathbb{Z}^2}\sum_{\alpha_a, \alpha_b \in \{-1, 0, +1 \} }& \sigma_{\vec{k}_a}^{\alpha_a}(t) \sigma_{\vec{k_b}}^{\alpha_b}(t) e^{i ~\left( (\vec{k}_a +  \vec{k_b} ) \cdot \vec{x} - (\omega_{\vec{k}_a}^{\alpha_a} + \omega_{\vec{k_b}}^{\alpha_b})t \right) } C_{\mathbf{k}_{a},\mathbf{k}_{b},\mathbf{k}}^{\alpha_{a},\alpha_{b},\alpha} \vec{r}_{\vec{k}}^{\alpha}.
\end{split}
\end{equation}

\noindent $C_{\mathbf{k}_{a},\mathbf{k}_{b},\mathbf{k}}^{\alpha_{a},\alpha_{b},\alpha} = \left< \mathcal{N} \left( \vec{r}_{\vec{k}_a}^{\alpha_a},\vec{r}_{\vec{k_b}}^{\alpha_b} \right) , \vec{r}_{\vec{k}}^{\alpha} \right> $ are interaction coefficients that can be found in \textcite{Majda2002}. Applying the positive matrix exponential to (\ref{eq:N_projected}) forms the right-hand side of the modulation equation (\ref{eq:modv}) in spectral space:
\begin{equation} 
\begin{split}
e^{t \mathcal{L}}
\mathcal{N}(e^{-t \mathcal{L}}\vec{V},e^{-t \mathcal{L}}\vec{V}) 
=& \\
\sum_{\vec{k} \in \mathbb{Z}^2} \sum_{\alpha \in \{-1, 0, +1 \} } \sum_{\vec{k}_a, \vec{k_b} \in \mathbb{Z}^2}\sum_{\alpha_a, \alpha_b \in \{-1, 0, +1 \} } &
\sigma_{\vec{k}_a}^{\alpha_a}(t) \sigma_{\vec{k_b}}^{\alpha_b}(t) 
e^{i ~\left( (\vec{k}_a + \vec{k_b}) \cdot \vec{x} - \Omega_{\mathbf{k},\mathbf{k}_{a},\mathbf{k}_{b}}^{\alpha,\alpha_{a},\alpha_{b}} t \right) }  C_{\mathbf{k}_{a},\mathbf{k}_{b},\mathbf{k}}^{\alpha_{a},\alpha_{b},\alpha}  \vec{r}_{\vec{k}}^{\alpha}.
 \label{eq:spec_deriv_rhs}
 \end{split}
\end{equation}

\noindent We see that the following quantity forms in the matrix exponential:
\begin{equation}
\Omega_{\mathbf{k},\mathbf{k}_{a},\mathbf{k}_{b}}^{\alpha,\alpha_{a},\alpha_{b}} = \omega_{\mathbf{k}_{a}}^{\alpha_{a}} + \omega_{\mathbf{k}_{b}}^{\alpha_{b}} - \omega_{\mathbf{k}}^{\alpha}.
\label{eq:triad_freq}
\end{equation}

\noindent This is the \textit{triadic frequency} and is a linear combination of frequencies of the linear operator, $\mathcal{L}$, for a triad of specific wavenumbers and mode types. This frequency measures the timescale of the triadic interaction and so is important for the resulting dynamics. Equation (\ref{eq:triad_freq}) shows that dispersive errors in the linear wave frequencies of $\omega_{\vec{k}}^{\alpha}$ will appear in the numerical approximation to $\Omega_{\mathbf{k},\mathbf{k}_{a},\mathbf{k}_{b}}^{\alpha,\alpha_{a},\alpha_{b}}$. This can impact the correct representation of nonlinear dynamics and is the key idea behind our new triadic error. \par
Now that we have left- and right-hand side expressions for our modulation equation in spectral space (\ref{eq:spec_deriv_lhs} and \ref{eq:spec_deriv_rhs}), we project these back onto the complex eigenfunctions to obtain the spectral evolution equation. This is achieved with complex inner products of $\left< \cdot , \sum_{\vec{k} \in \mathbb{Z}^2} \sum_{\alpha} e^{i(\vec{k} \cdot \vec{x})} \vec{r}_{\vec{k}}^{\alpha} \right>$ and integration over the domain for each $\vec{k}$ and $\alpha$. This results in an ODE in time for the evolution of each spectral coefficient:
\begin{align}
\label{eq:sigmaeqn}
\frac{d \sigma_{\vec{k}}^{\alpha}}{d t} =  
    \sum_{\substack{\vec{k}_a, \vec{k}_b \in \mathbb{Z}^2 \\ \vec{k}_a + \vec{k}_b = \vec{k}}} \sum_{\alpha_a, \alpha_b \in \{ -1, 0, +1 \} } \sigma_{\vec{k}_a}^{\alpha_a}(t) \sigma_{\vec{k}_b}^{\alpha_b}(t)  C_{\mathbf{k}_{a},\mathbf{k}_{b},\mathbf{k}}^{\alpha_{a},\alpha_{b},\alpha}  e^{-i \Omega_{\mathbf{k}_{a},\mathbf{k}_{b},\mathbf{k}}^{\alpha_{a},\alpha_{b},\alpha} t  }  .
\end{align}

\noindent The only wavenumber combinations that remain in this ODE are those that satisfy the triadic constraint of
\begin{equation}
    \label{eq:triadic_constraint}
    \vec{k} = \vec{k}_a + \vec{k}_b,
\end{equation}

\noindent where $\vec{k}_a, \vec{k}_b$ are wavenumbers of the incoming waves in the triad and $\vec{k}$ is the wavenumber of the outgoing wave. This is due to the orthonormality of the eigenbasis, as
\begin{equation}
    \left< e^{i \left( (\vec{k}_a + \vec{k}_b) \cdot \vec{x} \right) } \vec{r}_{\vec{k}}^{\alpha}, e^{i(\vec{k} \cdot \vec{x})} \vec{r}_{\vec{k}}^{\alpha} \right> = \begin{cases}
    1, &\text{if } \vec{k} = \vec{k}_a + \vec{k}_b, \\
    0, &\text{if } \vec{k} \neq \vec{k}_a + \vec{k}_b.
    \end{cases}
\end{equation}

The spectral evolution equation (\ref{eq:sigmaeqn}) shows that the nonlinear evolution of each spectral coefficient, $\sigma_{\mathbf{k}}^\alpha(t)$, depends on other spectral coefficients, an interaction coefficient, and an oscillatory term with the triadic frequency of $\Omega_{\mathbf{k}_{a},\mathbf{k}_{b},\mathbf{k}}^{\alpha_{a},\alpha_{b},\alpha}$ in its argument. This ODE represents the same dynamics as the standard form PDE (\ref{eq:standard-1}) but views the time evolution as the direct result of triadic interactions. As such, it provides an alternative approach for analysing nonlinear time discretisation errors in numerical solutions to the $f$-plane RSWEs. (\ref{eq:sigmaeqn}) is similar to the expression found in \textcite{Embid_1996}; the difference is that we have not considered asymptotic solutions or performed any averaging, so retain all nonlinear interactions over a wide range of triadic frequencies. \par
The term which governs the time evolution of the triads in the spectral evolution equation is

\begin{equation}
    \mathcal{T} \left( \Omega_{\mathbf{k},\mathbf{k}_{a},\mathbf{k}_{b}}^{\alpha,\alpha_{a},\alpha_{b}},t \right) = e^{i \Omega_{\mathbf{k},\mathbf{k}_{a},\mathbf{k}_{b}}^{\alpha,\alpha_{a},\alpha_{b}} t}.
    \label{eq:triad_prop}
\end{equation}

\noindent which we will call the \textit{triadic propagator}. We define $\mathcal{T}$ with a positive complex exponential, as the difference in sign from that in (\ref{eq:sigmaeqn}) change does not affect the analysis. The triadic propagator represents a time-varying quantity for a constant triadic frequency. When $\Omega_{\mathbf{k},\mathbf{k}_{a},\mathbf{k}_{b}}^{\alpha,\alpha_{a},\alpha_{b}}$ is zero or small, the triadic propagator will be constant or exhibit slow oscillations. These triads are classified as being direct- or near-resonant, respectively, and will generate low-frequency dynamics that are important over long time periods. The only triadic interactions that remain in the rapidly oscillating $\epsilon \rightarrow 0$ limit in the standard equation are the direct resonances \parencite{Embid_1996}. However, for finite values of $\epsilon$, near-resonances also provide a significant contribution to the dynamics \parencite{Newell_rossby,smith1999transfer} so they are important in this work. \par
As well as affecting the dynamics through the triadic propagator, the size of the triadic frequencies affects errors in a timestepped version of the spectral evolution equation (\ref{eq:sigmaeqn}). This was investigated by \textcite{Adam}, who derived the following error bound for a second-order timestepping scheme in the $f$-plane RSWEs,
\begin{equation}
    || \vec{y}(t) - \vec{y}_{\Delta t}(t) || \leq C M \left(\Delta t\right)^3 \max_{n \in \mathbb{N}} \left| \Omega_n \right|^2,
    \label{eq:adam_error_bound}
\end{equation}

\noindent where $C$ is a constant and $M$ is a bound on the size of the nonlinearity. $\Omega_n$ are the triadic frequencies of the resolvable interactions in the discretised domain, indexed in increasing order such that $|\Omega_{n}| \leq |\Omega_{n+1}|$. This error bound (\ref{eq:adam_error_bound}) shows that triadic interactions and their frequencies can be used to analyse large timestep errors. With this as motivation, we instead take a different approach and compute errors in a numerical representation of the triadic frequencies, $\Omega_{\mathbf{k},\mathbf{k}_{a},\mathbf{k}_{b}}^{\alpha,\alpha_{a},\alpha_{b}}$; we now introduce the triadic error to achieve this.

\subsection{A numerical triadic error}
To estimate errors in the nonlinear interaction frequencies we will use linear dispersion theory, which quantifies timestepping errors in representing linear wave frequencies. Such linear dispersion analyses have provided intuition in many wave propagation PDEs, e.g. \parencite{trefethen1982group,wingate_timestep_alpha,long2011numerical}. Our use of this theory to examine nonlinear interactions is a novel application. \par 
Linear dispersion errors can be identified by applying a timestepper to the complex-valued Dahlquist test equation \parencite{iserlies}. We consider the purely imaginary version of
\begin{equation}
    \label{eq:Dahlquist}
    \frac{d\vec{U}}{dt} = i \omega_{\mathbf{k}}^{\alpha} \vec{U},
\end{equation}

\noindent where an explicit factor of $i$ is extracted to leave a real-valued $\omega_{\mathbf{k}}^{\alpha}$, as per the RSWEs (\ref{eq:rswe_disp_rel}). (\ref{eq:Dahlquist}) is referred to as the oscillation equation \parencite{Durran} and its solution over a given timestep is an exponential mapping:
\begin{equation}
    \label{eq:Dahlquist_sol}
    \vec{U}(t + \Delta t) =  e^{i \omega_{\mathbf{k}}^{\alpha} \Delta t} \vec{U}(t).
\end{equation}

Applying a timestepping method in the oscillation equation will generate an approximate solution after one timestep, $\vec{U}_N(t + \Delta t)$. The mapping from $\vec{U}(t)$ to $\vec{U}_N(t + \Delta t)$ is the stability polynomial, $P$, of a timestepping scheme,
\begin{equation}
    \vec{U}_N(t + \Delta t) = P(W_{\mathbf{k}}^{\alpha}) \vec{U}(t).
    \label{eq:stability_poly}
\end{equation}

\noindent where $W_{\mathbf{k}}^{\alpha} = i \omega_{\mathbf{k}}^{\alpha} \Delta t$. Comparing (\ref{eq:Dahlquist_sol}) and (\ref{eq:stability_poly}) shows that $P(W_{\mathbf{k}}^{\alpha})$ represents a numerical method's approximation to $\exp(W_{\mathbf{k}}^{\alpha})$. \par
Now, we apply a Dahlquist analysis on the nonlinear interactions that appear in the RSWEs. Let us consider the oscillation equation for a triad of frequency $\Omega_{\mathbf{k}_{a},\mathbf{k}_{b},\mathbf{k}}^{\alpha_{a},\alpha_{b},\alpha}$:
\begin{equation}
    \label{eq:Dahlquist_om}
    \frac{d\vec{U}}{dt} = i \Omega_{\mathbf{k}_{a},\mathbf{k}_{b},\mathbf{k}}^{\alpha_{a},\alpha_{b},\alpha} \vec{U}.
\end{equation}

\noindent This equation has an analytical solution of
\begin{equation}
    \label{eq:dahlquist_om_sol}
    \vec{U}(t + \Delta t) =  e^{i \Omega_{\mathbf{k}_{a},\mathbf{k}_{b},\mathbf{k}}^{\alpha_{a},\alpha_{b},\alpha} \Delta t} \vec{U}(t).
\end{equation}

\noindent This time evolution operator is a discretisation over one timestep of the triadic propagator (\ref{eq:triad_prop}). We denote the discretised version $\mathcal{T}_{\Delta t}$ and decompose it into contributions from the individual linear waves:
\begin{align}
\begin{split}
    \mathcal{T}_{\Delta t} \left( \Omega_{\mathbf{k}_{a},\mathbf{k}_{b},\mathbf{k}}^{\alpha_{a},\alpha_{b},\alpha},\Delta t \right) = e^{i \Omega_{\mathbf{k}_{a},\mathbf{k}_{b},\mathbf{k}}^{\alpha_{a},\alpha_{b},\alpha} \Delta t} &= \exp \left(i \left( \omega_{\mathbf{k}_{a}}^{\alpha_{a}} + \omega_{\mathbf{k}_{b}}^{\alpha_{b}} - \omega_{\mathbf{k}}^{\alpha} \right) \Delta t \right) \\
    &= 
    \exp \left( W_{\vec{k}_a}^{\alpha_a} \right) \exp \left( W_{\vec{k}_b}^{\alpha_b} \right) \exp \left( - W_{\vec{k}}^{\alpha} \right),
\label{eq:triad_prop_split}   
\end{split}
\end{align}

\noindent We can construct a numerical approximation to $\mathcal{T}_{\Delta t}$ by using stability polynomials for each of the linear waves:
\begin{equation}
    \mathcal{T}_N \left( W_{\vec{k}_a}^{\alpha_a}, W_{\vec{k}_b}^{\alpha_b}, W_{\vec{k}}^{\alpha} \right) =  P\left(W_{\vec{k}_a}^{\alpha_a}\right) P\left(W_{\vec{k}_b}^{\alpha_b}\right) P\left(- W_{\vec{k}}^{\alpha}\right).
    \label{eq:num_triad_prop}
\end{equation}

\noindent We then define the triadic error as the difference between the true discretised triadic propagator, $\mathcal{T}_{\Delta t}$, and the numerical approximation, $\mathcal{T}_N$:
\begin{equation}
    E \left( \vec{k}, \vec{k}_a, \vec{k}_b, \alpha, \alpha_a, \alpha_b, \Delta t \right) = \left\Vert \mathcal{T}_{\Delta t} \left( \Omega_{\mathbf{k}_{a},\mathbf{k}_{b},\mathbf{k}}^{\alpha_{a},\alpha_{b},\alpha},\Delta t \right) - \mathcal{T}_N \left( W_{\vec{k}_a}^{\alpha_a}, W_{\vec{k}_b}^{\alpha_b}, W_{\vec{k}}^{\alpha} \right) \right\Vert .
    \label{eq:triadic_error}
\end{equation}

\noindent The triadic error can be computed for any valid triadic combination of three linear waves. It is measured over \textit{one timestep}, which is important when determining the order of accuracy of timestepping methods, as we shall do in the next section. \par
There are four key differences between the triadic error and linear dispersion analyses of the individual waves:
\begin{enumerate}
    \item The triadic error quantifies time discretisation errors in a component of the \textit{nonlinear} interactions.
    \item It simultaneously quantifies errors from the linear and nonlinear terms of the RSWEs through the spectral evolution equation (\ref{eq:sigmaeqn}) as opposed to analysing $\mathcal{L}$ and $\mathcal{N}$ independently in the standard form PDE (\ref{eq:standard-1}).
    \item The importance of errors on long timescale solutions is determined by the frequency of the nonlinear interaction, $|\Omega|$, as opposed to the speed of the linear waves, $|\omega|$. Hence, the triadic error reflects how errors in fast oscillations can impact the accuracy of slow dynamics.
    \item The triadic error considers both the magnitude of the linear dispersion errors and how these are composed within the temporal discretisation. This generates a different relative comparison between timesteppers compared to individual linear dispersion analyses.
\end{enumerate}

\subsection{Triadic interactions in the discretised RSWEs}
\label{subsec:rswe_triad_interactions}
To compute triadic errors in a numerical model, we first need to ascertain the set of triads that can contribute to the dynamics. As opposed to a continuous physical system, a computational grid only permits a finite number of triads. A valid triad not only must satisfy the triadic condition on the wavenumbers (\ref{eq:triadic_constraint}), but the third wavenumber needs to remain within the domain of resolvable wavenumbers, as detailed in \textcite{Smith_Lee}. The number of permissible triads rapidly grows with the domain size, which means that modelling all the interactions in (\ref{eq:sigmaeqn}) can be extremely computationally expensive. Hence, reduced models that only compute the dominant interactions are often used in practice, such as in \textcite{Smith_Lee,Adam}. As the direct- and near-resonances are the dominant contributors to long time dynamics, we select these by introducing a \textit{cut-off frequency}, $\Omega_C$. This determines a \textit{dominant subset}, $S_D$, which contains the $N$ triads with a frequency below or equal to this cut-off value:
\begin{equation}
    S_D := \{ \Omega_n(\vec{k}_a, \vec{k}_b, \vec{k}, \alpha_{a}, \alpha_{b}, \alpha), n = \{ 1,2, .. ,N \} \ | \ |\Omega_n| \leq |\Omega_{n+1}|, |\Omega_n| \leq \Omega_C \}.
    \label{eq:dom_triad_set}
\end{equation}

\noindent For instance, setting $\Omega_C = 0$ will leave only directly resonant triads in $S_D$, whereas $\Omega_C = \epsilon$ also includes near-resonant triads based on a classical definition \parencite{Newell_rossby, Smith_Lee}. The average triadic error for a given numerical timestepper is computed as the mean triadic error over all interactions in the dominant subset, 
\begin{equation}
    E(\Omega_C, \Delta t) = \frac{1}{N} \sum_{\Omega_n \in S_D} E \left( \vec{k}, \vec{k}_a, \vec{k}_b, \alpha, \alpha_a, \alpha_b, \Delta t \right) .
    \label{eq:num_triad_error_metric}
\end{equation}

A key feature to determining the triads contained in $S_D$ is that $\Omega_{\mathbf{k},\mathbf{k}_{a},\mathbf{k}_{b}}^{\alpha,\alpha_{a},\alpha_{b}}$ is not only a function of the wavenumbers, but also of the interacting wave modes, $\alpha$. With three different modes in the RSWEs, $\alpha \in \{ -1, 0, 1 \}$, there are  $3^3=27$ permutations of three linear waves. Many of these permutations are equivalent due to the arbitrary indexing of the incoming waves and the two fast modes being identical in all but sign --- this leaves six main interaction types. However, types \textit{ii}, \textit{iii}, \textit{iv}, and \textit{v} have a slight variation in the distribution of frequencies depending on the mode type of the outgoing wave relative to the incoming ones. For example, a slightly different distribution of frequencies occurs for the type \textit{ii} interaction (two fast waves and one slow), when the outgoing wave is slow ($ \alpha_a = \pm 1, \alpha_b = \mp 1, \alpha = 0$) versus when one of the incoming waves is slow (e.g. $\alpha_a = 0, \alpha_b = \mp 1, \alpha = \mp 1$). This difference is a result of the discretised spatial domain and additional \textit{a} and \textit{b} subtypes are introduced to account for this. Table 1 overviews these ten interaction types in order of increasing triadic frequency, with $S$ denoting the slow ($\alpha=0$) mode and $\pm$ denoting the fast ($\alpha=\pm1$) modes.  \par

\begin{table}
\caption{A table of the 27 permutations of triadic interactions in the RSWEs, giving the range of triadic frequencies with $\epsilon=0.1, N_x = N_y = 32$ grid points. The interaction types are ordered by increasing $\Omega$ values; the smaller numeral types contain triads with more dominant contributions over long time periods. The involved modes gives the modes of three waves, using $S$ for the slow $\alpha=0$ mode and $\pm$ for the fast $\alpha=\pm1$ modes. Types \textit{ii}, \textit{iii}, \textit{iv}, and \textit{v} have subtypes as a result of different frequencies resulting when the mode of the outgoing wave changes (e.g. for type \textit{ii}, $\pm S \pm and S \pm \pm$ vs $\pm \mp S$). We also use shorthand acronyms to refer to specific interaction types.}
\begin{center}
\begin{tabular}{ |c|c||c|c|c|c| }
\hline
Interaction Type &
Acroynm &
Involved Modes &
Count &
$\Omega$ range\\ 
\hline\hline
\hline
\textit{i} & SSS & $SSS$ & 1 & [0,0] \\ 
\hline
\textit{ii-a}& FSF & $\pm \mp S$ & 2 & [0,216.5]\\ 
\hline
\textit{ii-b} & FSF & $ \pm S \pm, S \pm \pm$ & 4 & [0,216.5] \\
\hline
\textit{iii-a} & FFF & $\pm \pm \pm$ & 2 &  [0.66,421.77]\\ 
\hline
\textit{iii-b} & FFF & $ \pm \mp \pm, \pm \mp \mp$ & 4 & [0.66,442.99] \\
\hline
\textit{iv-a} & SFS & $SS\pm$ & 2 & [10, 226.50]\\ 
\hline
\textit{iv-b} & SFS & $ \pm S S, S \pm S$ & 4 & [10, 226.50] \\
\hline
\textit{v-a} & - & $\pm \pm S$ & 2 & [20,439.09]\\ 
\hline
\textit{v-b} & - & $ S \pm \mp, \pm S \mp$ & 4  & [20, 452.50] \\
\hline
\textit{vi} & - & $\pm \pm \mp $ & 2 & [30, 547.12]\\
\hline
\end{tabular}
\end{center}
\label{table:triad_list}
\end{table}

Interaction type \textit{i}, the slow-slow-slow (SSS) interaction, involves all zero frequency modes on the $f$-plane. As the selected numerical methods all correctly model zero dispersion relations with $P(0)=1$, these will not be analysed in this work. Interaction type \textit{ii}, the fast-slow-fast (FSF) interaction, has direct resonance contributions, as well as a proportion of near-resonances, depending on $\Omega_C$. Interaction type \textit{iii}, a fast-fast-fast (FFF) interaction, can never construct direct resonances in a discrete domain, but contains small $\Omega$ near-resonances. This interaction type shows how combinations of entirely fast waves can construct low-frequency dynamics. Interaction type \textit{vi} is also comprised solely of fast modes, but the combination of the signs is such that only much larger triadic frequencies are present. \par
Our computation of triadic errors in the following section will consider interaction types \textit{ii} and \textit{iii} of both subtypes \textit{a} and \textit{b}. Interaction types \textit{iv}, \textit{v}, or \textit{vi} are not included as no triads of these types appear in $S_D$ for our choice of spatial discretisation and $\Omega_C < 10$. 

\section{Triadic error analysis of timesteppers in the $f$-plane RSWEs}
\label{section:triad_errors}
We now use the triadic error metric to compare some numerical timestepping schemes. A selection of explicit and implicit methods is provided, but any scheme with a known stability polynomial can be analysed in this manner. Individual analyses (Sections \ref{subsec:RK}--\ref{subsec:ETD}) are provided before comparing a selection of these in Section \ref{subsec:triadic_comps}. Additionally, a comparison of linear dispersion errors with the triadic error will highlight the differences between these two metrics. We again note that the triadic error is computed over one timestep, so the timestepping orders of accuracy will reflect this. \par
Measures of the triadic errors in this section use $\epsilon = 0.1$ and a spatial discretisation of $N_x=N_y=32$ grid points in the $x$ and $y$ dimensions. Triadic errors are computed using (\ref{eq:num_triad_error_metric}) and plotted as a function of timestep size at two cut-off frequencies. The first is $\Omega_C = \epsilon = 0.1$, which, with the chosen level of spatial discretisation, only permits direct interaction type \textit{ii} (FSF) triads to be present in $S_D$. The second cut-off frequency of $\Omega_C = 5$ permits a large number of near-resonant interactions, of both mode types \textit{ii} (FSF) and \textit{iii} (FFF). Logarithmic axes are used in the triadic error plots to highlight order-of-magnitude differences between the schemes. \par

\subsection{Explicit RK methods}
\label{subsec:RK}
Runge-Kutta (RK) schemes are multistage methods, as they obtain information at intermediate times to form an average function evaluation for each timestep. We consider explicit $K$-order and $K$-stage RK schemes, whose stability polynomial is a truncation of a complex exponential expansion \parencite{iserlies}. Such RK timesteppers commit an $\mathcal{O}(\Delta t)^{K+1}$ error in the dispersion relation of any linear frequency over one timestep:

\begin{equation}
    P_{\text{RK,K}} = \sum_{k=0}^{K} \frac{(i \omega \Delta t)^k}{k!} = \sum_{k=0}^{K} \frac{W^k}{k!}.
    \label{eq:P_RK}
\end{equation}

For a non-direct FSF (type \textit{ii-b}) triad, it can be shown that combining stability polynomials in $\mathcal{T}_N$ leads to a triadic error of the same $\mathcal{O}(\Delta t)^{K+1}$ order. However, for the direct resonances, an even-order RK method has an increased $\mathcal{O}(\Delta t)^{K+2}$ order of accuracy in the triadic error. Proofs of these error levels are provided in Theorems 1 and 2 in Appendix B. \par
As a higher-order RK scheme more accurately approximates the linear wave frequencies, it would be expected to have a lower error when resolving a given triad. Figure \ref{fig:RK_methods_triadic_error} indeed shows smaller triadic errors for the higher-order RK schemes. $\Omega_C=0.1$ has a larger accuracy difference at the smallest timesteps between RK2 and RK1, and RK4 and RK3; this highlights the higher order of accuracy for direct resonances with the even-order schemes. This difference reduces with increasing $\Omega_C$, as the inclusion of many non-resonant interactions --- which have $\mathcal{O}(\Delta t)^{K+1}$ triadic errors for any RK method --- causes RK2 and RK4 to commit relatively larger errors. \par

\begin{figure}
     \centering
     \begin{subfigure}[b]{0.48\textwidth}
         \centering
         \includegraphics[width=0.95\textwidth]{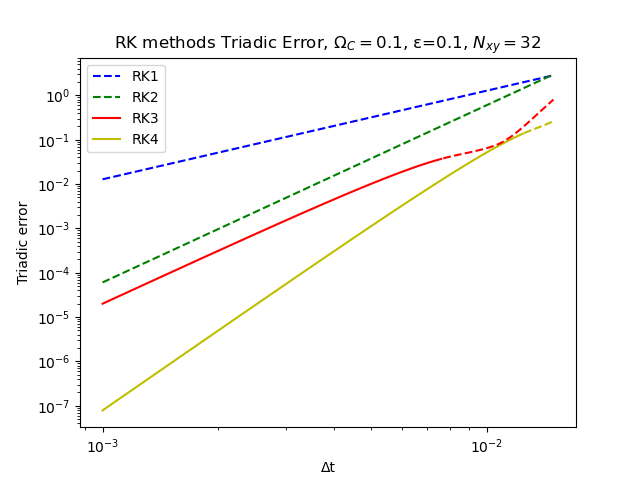}
         \caption{$\Omega_C = 0.1$}
     \end{subfigure}
     \hfill
     \begin{subfigure}[b]{0.48\textwidth}
         \centering
         \includegraphics[width=0.95\textwidth]{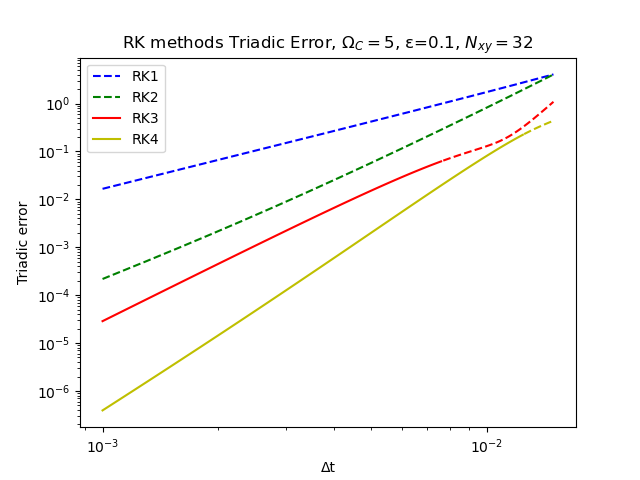}
         \caption{$\Omega_C = 5$}
     \end{subfigure}
        \caption{Triadic errors of the first four explicit RK methods at two cut-off frequencies. Stable timesteps are given by the solid lines, with dashed lines being regions above the timestep limit from the oscillations. RK1 and RK2 are shown to illustrate the effect of scheme order on the triadic errors but are in fact unconditionally unstable for purely oscillatory behaviour.}
    \label{fig:RK_methods_triadic_error}
\end{figure}

\subsection{Generalised Euler (alpha) methods}
These methods use a composition of implicit and explicit timestepping information. The parameter, $\alpha \in [0,1]$, quantifies the amount of implicit information used from the future time location, as opposed to the current time location; $\alpha = 0$ reduces to the fully explicit forwards Euler method, while $\alpha=1$ reduces to the fully implicit backwards Euler method. The stability polynomial, as a function of $\alpha$, is given in \textcite{Durran},

\begin{equation}
    P_{\alpha} = \frac{1 + (1 - \alpha)W}{1 - \alpha W} .
    \label{eq:P_gen_euler}
\end{equation}

\noindent These regimes have a switch in stability behaviour at the balanced case of $\alpha=0.5$, which is known as the trapezoidal method. For $\alpha \in [0,0.5)$, this scheme is unconditionally unstable, while it is unconditionally stable when $\alpha \in [0.5,1]$. \par
Figure \ref{fig:Euler_methods_triadic_error} compares triadic errors for selected stable alpha schemes, ranging from trapezoidal to backwards Euler. The trapezoidal method has no error at $\Omega_C = 0.1$, as it generates no triadic error for directly resonant triads --- this is proved in Theorem 3 in Appendix B. With the introduction of many near-resonances in Figure 5b, the trapezoidal method does generate a significant triadic error, but remains the optimal generalised Euler scheme. This is consistent with $\alpha=0.5$ being the only selection with a second-order, as opposed to first-order, dispersion relation error \parencite{Durran}. The greater the deviation of $\alpha$ from 0.5, the greater the triadic error that is committed, with backwards Euler generating the most for any timestep size. \par

\begin{figure}
     \centering
     \begin{subfigure}[b]{0.48\textwidth}
         \centering
         \includegraphics[width=0.95\textwidth]{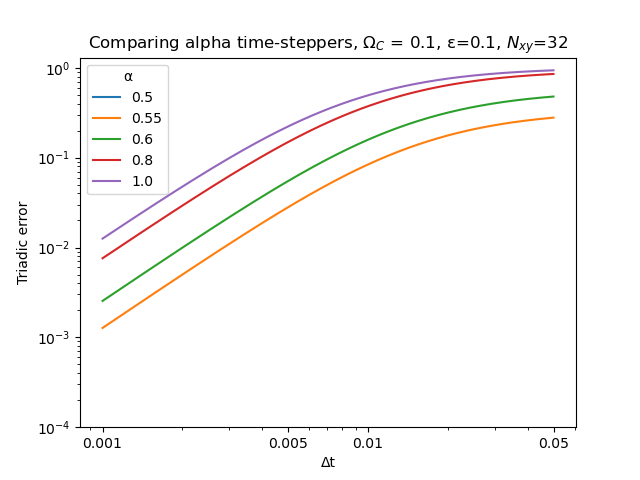}
         \caption{$\Omega_C = 0.1$}
     \end{subfigure}
     \hfill
     \begin{subfigure}[b]{0.48\textwidth}
         \centering
         \includegraphics[width=0.95\textwidth]{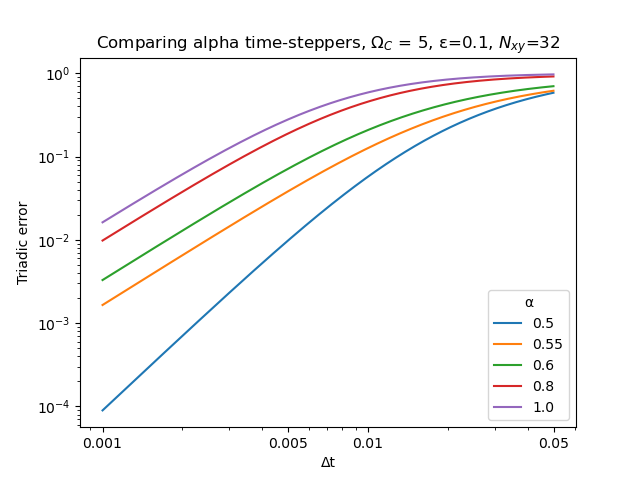}
         \caption{$\Omega_C = 5$}
     \end{subfigure}
        \caption{Triadic errors for stable alpha methods, of $\alpha \in \{ 0.5,0.55,0.6,0.8,1  \}$. There is an increasing triadic error with larger $\alpha$ for both $\Omega_C = 0.1$ and $\Omega_C = 5$. As the trapezoidal method generates zero error for direct resonances, it is not shown in (a). This scheme remains optimal when including near resonances in (b).}
    \label{fig:Euler_methods_triadic_error}
\end{figure}

\subsection{AB methods}
Explicit Adams-Bashforth (AB) methods are termed multistep, as they reuse gradients from previously computed timesteps. The corresponding stability polynomials require solving for the roots, $\lambda$ \parencite{iserlies}. The stability polynomial for the AB3 method is:

\begin{equation}
P_{\text{AB3}}: \ \ \ \ \ \lambda^3 - \left( 1 + \frac{23}{12} W \right) \lambda^2 + \frac{16}{12} W \lambda - \frac{5}{12} W = 0 .
\end{equation}

Most AB schemes have multiple roots to their stability polynomial, with some corresponding to unphysical computational modes. Our analysis uses triads consisting only of physical modes. This means that the AB3 results in Section \ref{subsec:triadic_comps} are for the best-case scenario.

\subsection{TR-BDF2}
Backwards differentiation formulae (BDF) are implicit multistep schemes, designed to ensure large stability regions \parencite{Durran}. The second order scheme, TR-BDF2, is L-stable due to using the trapezoidal rule in its first step \parencite{TR-BDF2}. Its stability polynomial is:
\begin{equation}
P_{\text{TR-BDF2}} = \frac{1 +\frac{5}{12}W}{1 - \frac{7}{12}W + \frac{1}{12}W^2} .
\label{eq:P_TR_BDF2}
\end{equation}

\subsection{Exponential time differencing schemes}
\label{subsec:ETD}
Exponential time differencing (ETD) schemes were developed to overcome numerical stiffness, resulting from a small $\epsilon$ in the standard equation (\ref{eq:standard-1}), by using the exact linear operator \parencite{Pope_exp_ints,cox_mathews}. This matrix exponential integrating factor is similar to the modulation variable mapping (\ref{eq:map}) except that it is discretised over each timestep as $\exp(-\Delta t \mathcal{L})$. To perform a timestep, an ETD scheme solves
\begin{equation}
    \label{eq:exp_int}
   \vec{U}(t_n + \Delta t) = e^{-\Delta t \mathcal{L}} \vec{U}(t_n) + e^{-\Delta t \mathcal{L}} \int_{t_n}^{t_n+ \Delta t} e^{(\tau - t_n) \mathcal{L}} \mathcal{N}(\vec{U}(\tau)) d \tau .
\end{equation}

\noindent Higher-order ETD methods use more accurate approximations of the integral of the nonlinearity. Examples of exponential integrator schemes include Runge-Kutta, Rosenbrock, and multistep implementations \parencite{Hochburck_Ostermann_exp_ints}. In this paper, we will examine the ETD-RK2 scheme of \textcite{cox_mathews}. \par
Due to inherently possessing an exact representation of the linear waves, the stability polynomials of ETD schemes are exact solutions to the oscillatory Dahlquist equation,
\begin{equation}
    P_{\text{ETD}} = e^{\Delta t \mathcal{L}} = e^{i \omega \Delta t} .
\end{equation}

\noindent Thus,
\begin{equation}
    \mathcal{T}_N = \exp \left(i \omega_{\mathbf{k}_{a}}^{\alpha_{a}} \Delta t\right) \exp \left(i \omega_{\mathbf{k}_{b}}^{\alpha_{b}} \Delta t \right) \exp \left(-i \omega_{\mathbf{k}}^{\alpha} \Delta t \right) = e^{i \Omega \Delta t} = \mathcal{T}_{\Delta t},
\end{equation}

\noindent and so, 
\begin{equation}
    E_{\text{ETD}} = 0, ~\forall ~\vec{k}_a,\vec{k_b} \in \mathbb{Z}^2.
\label{eq:E_ETD}
\end{equation}

Hence, the ETD combination of the three linear waves in the triadic propagator is exact at leading order and no triadic error is generated. This result is a consequence of using a Dahlquist analysis of the triadic frequency to derive the triadic error. This does not imply that ETD methods solve these equations exactly, as the triadic frequency is not the only source of error in the spectral evolution equation (\ref{eq:sigmaeqn}). Instead, it reflects that ETD schemes are effective at modelling linear waves, so this accuracy might carry over when viewing the nonlinear system as the result of interactions of three linear waves. A triadic error metric that considers additional components in the discretised error, such as in the interaction coefficients, would provide additional insight into the efficacy of ETD schemes in modelling nonlinear interactions.  \par

\subsection{Comparisons of selected methods}
\label{subsec:triadic_comps}
We now compare triadic errors in the discretised RSWEs for instances of the previously analysed timestepping schemes: RK3, RK4, trapezoidal, AB3 and TR-BDF2. ETD-RK2 is not included in these comparisons due to the zero triadic error of ETD schemes (\ref{eq:E_ETD}). With our chosen spatial discretisation and $\epsilon$ value, the linear stability limits for RK3, RK4, and AB3 are 0.00764, 0.0125, and 0.00318 respectively (oscillatory stability limits for these and other timesteppers can be found in \textcite{Durran}). We consider a range of timesteps up to $\Delta t = 0.05$ to include the performance of the implicit methods beyond the stability limits of these explicit methods. \par 
We first examine how the triadic error differs from an analysis of the linear dispersion errors in the three individual waves. Individual waves are referred to by their wavenumber and mode type as $\Psi=(k,l,\alpha_{\Psi})$, with $\alpha_{\Psi} \in \{ -,S,+ \}$ denoting the fast negative, slow, and fast positive modes respectively. Consider the case of a directly resonant fast-slow-fast (type \textit{ii}) interaction. For this to be directly resonant, the fast waves of $\Psi_a$ and $\Psi_c$ require an equal magnitude wavenumber \parencite{Embid_1996}. For example, consider $\Psi_a = (5,0,+), \Psi_c = (0,5,+),$ which are both fast waves with $K = 5, \omega = \sqrt{26}$. These waves have a different spatial orientation but exhibit the same linear dynamics and dispersion errors, shown in Figure \ref{fig:linear_vs_nonlinear_triadic_error_a}. The slow wave of $\Psi_b = (-5,5,S) $ has no triadic error, so is not shown. The triadic errors of this FSF interaction in Figure \ref{fig:linear_vs_nonlinear_triadic_error_b} are distinctly different for the timesteppers, compared to the linear errors. The trapezoidal method has linear dispersion errors for the fast waves, but these cancel in the triadic error metric. RK3 has a lower error than TR-BDF2 for the linear fast waves, yet has a higher triadic error for the FSF interaction. As the triadic error reflects the \textit{composition} of linear dispersion errors within the triadic propagator approximation, it is a different quantity to the separate linear dispersive errors.

\begin{figure}
     \centering
     \begin{subfigure}[b]{0.48\textwidth}
         \centering
         \includegraphics[scale=0.48]{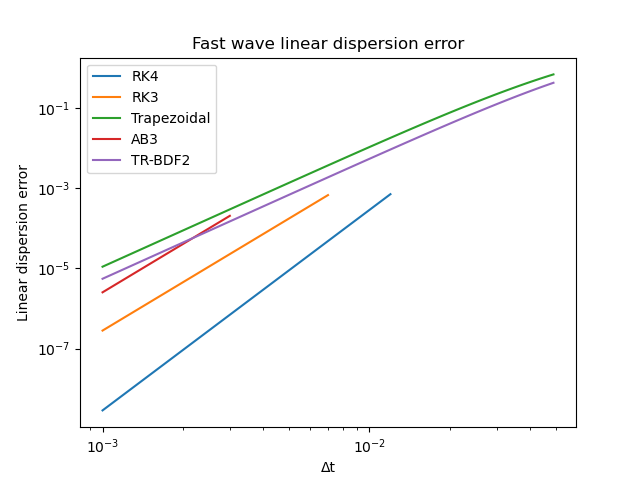}
         \caption{Fast linear wave errors}
         \label{fig:linear_vs_nonlinear_triadic_error_a}
     \end{subfigure}
     \hfill
     \begin{subfigure}[b]{0.48\textwidth}
         \centering
         \includegraphics[scale=0.48]{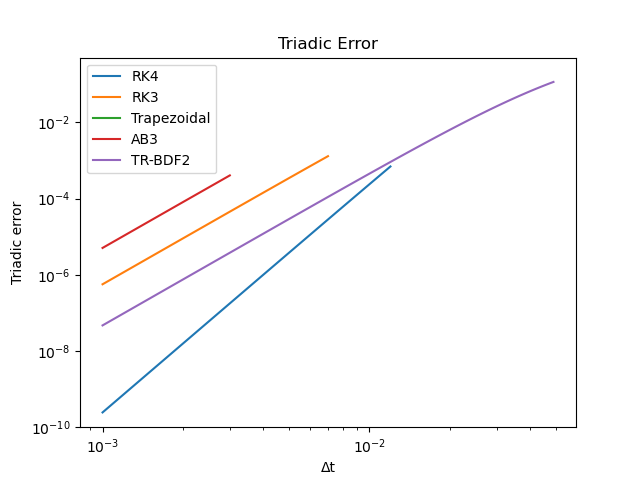}
         \caption{Triadic error }
         \label{fig:linear_vs_nonlinear_triadic_error_b}
     \end{subfigure}
        \caption{Contrasting the timestepping linear dispersion errors for a fast wave of $K = 5, \omega = \sqrt{26}$ in (a), with the triadic error of its interaction in a directly resonant fast-slow-fast (FSF) triad in (b). The trapezoidal method has linear dispersion errors for the fast waves, but these cancel in the triadic error metric for this direct resonance. RK3 has a lower error than TR-BDF2 for the linear waves, yet has a higher error for the complete triadic interaction.}
    \label{fig:linear_vs_nonlinear_triadic_error}
\end{figure}

We next consider the triadic errors of these selected schemes at the two cut-off frequencies, $\Omega_C$.
At $\Omega_C = 0.1$ (Figure \ref{fig:triadic_error_diff_methods_omegaC_0.1}), the trapezoidal method has zero triadic error, like ETD-RK2, due to the cancellation of linear fast wave errors in the triadic error measure. At the smallest timestep of $\Delta t =0.001$, there is a clear separation of errors for the remaining schemes; by order of reducing triadic error, we have AB3, RK3, TR-BDF2, then RK4. AB3, RK3, and TR-BDF2 have a very similar rate of error increase with $\Delta t$, although this slows down for TR-BDF2 beyond the explicit timestep limits. RK4 has a steeper increase in triadic error with step size and becomes less accurate than TR-BDF2 for $\Delta t \geq 0.0046$. \par
The relative accuracy of the methods change
when including a large number of near-resonant triads for the $\Omega_C = 5$ case (Figure \ref{fig:triadic_error_diff_methods_omegaC_5}).
The trapezoidal method now generates a considerable triadic error; it is less accurate than TR-BDF2 for the majority of considered timesteps and is only slightly more accurate for $\Delta t \geq 0.021$. AB3 and RK3 have a similar rate of error increase again, although this time they perform better relative to the implicit methods of trapezoidal and TR-BDF2. RK4 commits the least triadic error for the smallest timesteps and remains more accurate than TR-BDF2 until $\Delta t \geq 0.0076$, which is a larger range of timesteps than at $\Omega_C = 0.1$. \par
For both cut-off frequencies with this discretisation, AB3 has the largest triadic error, even when only considering its physical modes. The trapezoidal scheme has a smaller triadic error than TR-BDF2 for higher proportions of directly resonant triads. RK4 is very accurate for small timesteps, but has a rapid increase in triadic errors with $\Delta t$ as the truncation errors in its stability polynomial dominate; this means that it can be less accurate than implicit methods before reaching its stability limit. TR-BDF2 is unconditionally stable for any timestep, but the tradeoff for this stability is a large damping of the oscillatory waves \parencite{TR_BDF2_analysis} which is reflected in a significant triadic error at small timesteps.

 \begin{figure}
        \centering
        \begin{subfigure}[b]{0.475\textwidth}
            \centering
            \includegraphics[width=\textwidth]{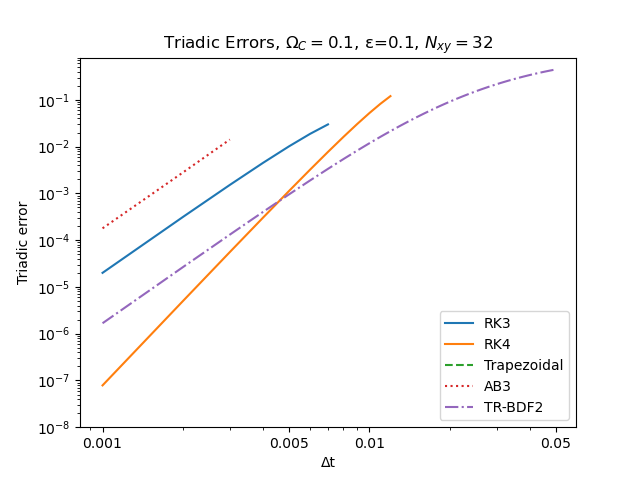}
            \caption{$\Omega_C = 0.1$}
            \label{fig:triadic_error_diff_methods_omegaC_0.1}
        \end{subfigure}
        \hfill
        \begin{subfigure}[b]{0.475\textwidth}  
            \centering 
            \includegraphics[width=\textwidth]{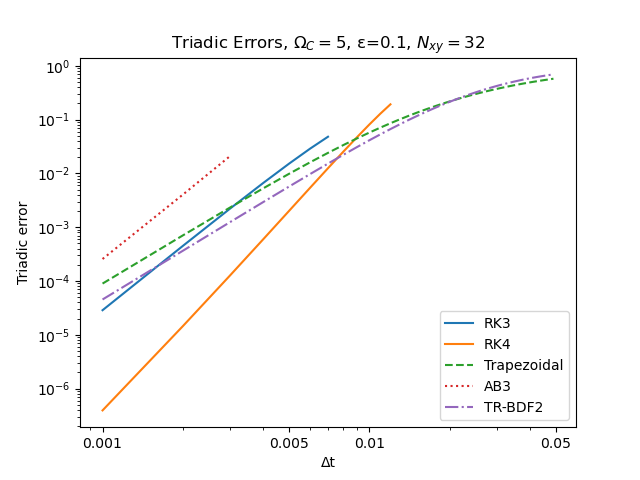}
            \caption{$\Omega_C = 5$}
            \label{fig:triadic_error_diff_methods_omegaC_5}
        \end{subfigure}
        \caption{Comparing triadic errors of RK3, RK4, trapezoidal, AB3, and TR-BDF2 for $ \Omega_C \in \{ 0.1,5 \}$. Oscillatory stability limits are included, which restrict the allowable timesteps of the AB3, RK3, and RK4 methods. For all the cases, AB3 has the largest errors. RK3 has a very similar rate of error increase with $\Delta t$, but is about an order of magnitude more accurate. RK4 has a much lower error for small step sizes but a much faster rate of error increase with $\Delta t$. As such, it is less accurate than TR-BDF2 at its largest timesteps, although it performs better for longer with $\Omega_C = 5$. The trapezoidal method has no error for direct resonances, so is not shown in (a), but has a larger error than TR-BDF2 for most timesteps with $\Omega_C = 5$.}
    \label{fig:triadic_error_diff_methods}
\end{figure}

\section{Numerical Models}
\label{section:numerical_models}
So far, we have described and used a new nonlinear error metric for timestepping schemes with known stability polynomials from the oscillatory Dahlquist test equation. However, this approach is not feasible for all numerical models, as the time discretisation can be highly integrated with physics and other model features, such as in LFRic. Hence, our second method of error quantification will provide test cases that can be applied to any numerical model. An additional difference from the triadic error is that we now consider errors in a solution over a time interval, as opposed to one timestep. \par
For this paper, we compare selected timesteppers within three different numerical models. The first model is a pseudospectral code \parencite{boyd2001chebyshev,canuto2007spectral}. The second is Gusto, a compatible finite element toolkit for solving atmospheric dynamical PDEs. The Gusto software is open source and can be accessed at \url{http://firedrakeproject.github.io/gusto}. The third model is LFRic, which is currently in development at the Met Office \parencite{Melvin_gungho}. \par
In these test cases, we seek to isolate the timestepping error from other sources of error in the models. To do this, we run simulations with a range of large timesteps and compare these to a small timestep reference solution, generated within that model. This reference solution is not exact, as all numerical methods introduce discretisation errors (e.g. the pseudospectral method has errors in the wave frequencies,  Fourier transform treatment of the nonlinearity, dealiasing, etc.). However, it is sufficient to examine the increase in timestepping error with larger $\Delta t$. To study errors generated within the nonlinear interactions, our comparison metric will measure discrepancies in spectral coefficients, $\sigma_{\vec{k}}^{\alpha}$, that appear in the spectral evolution ODE (\ref{eq:sigmaeqn}). \par
These numerical models all use dimensional versions of the RSWEs, like (\ref{eq:rswes_dim}). As such, we quote test case parameters with dimensions. We choose these to ensure a faster timescale for the linear oscillations than the nonlinear interactions, which is analogous to having a small $\epsilon$ in the standard equation (\ref{eq:standard-1}). We use $f=10 ~\text{s}^{-1}, ~g=50 ~\text{m} ~\text{s}^{-2}, ~H_0=2 ~\text{m}$, as this means that $Ro = Fr = \epsilon = 0.1$ when using characteristic scalings of $\mathtt{U} = \mathtt{L} = 1$. The nondimensional numbers obtained with our numerical models are typically smaller than 0.1 (Table \ref{table:test_case_nonD_nums}), which verifies that there is a separation of timescales.

\begin{table}
\caption{Quoting the Rossby and Froude numbers for the numerical models in both test cases. The finite element models of Gusto and LFRic use the same parameters, so have the same nondimensional numbers. The characteristic velocity of $\mathtt{U}$ is computed as the average over the simulation of the maximum RMS velocities. The characteristic length scale is set as $\mathtt{L} = L_{\vec{x}}$. The values of $Ro$ and $Fr$ in our experiments are less than or approximately equal to 0.1, so this is equivalent to $\epsilon \leq 0.1$ in the standard form (\ref{eq:standard-1}) RSWEs. This means that there is a separation of timescales between fast linear oscillations and slow nonlinear interactions.}
\begin{center}
\begin{tabular}{ |c|c|c|c| }
\hline
Test Case &
Numerical Model &
$Ro$ &
$Fr$ \\ 
\hline\hline
\hline
\multirow{2}{4em}{\centering 1} & Pseudospectral & 0.017 & 0.11 \\ 
 & Finite Element & 0.0012 & 0.012 \\
\hline
\multirow{2}{4em}{\centering 2a} & Pseudospectral & 0.0025 & 0.016 \\ 
& Finite Element & 0.0016 & 0.016 \\
\hline
\multirow{2}{4em}{\centering 2b} & Pseudospectral & 0.0073 & 0.046 \\
 & Finite Element & 0.0040 & 0.040 \\
\hline
\end{tabular}
\end{center}
\label{table:test_case_nonD_nums}
\end{table}

\subsection{Pseudospectral model}
The pseudospectral model transforms the variables between physical and spectral space with the use of the Fast Fourier Transform (and the potential to use 2/3 de-aliasing) to compute the multiplication and spatial differentiation operations. Five timestepping schemes from Section \ref{section:triad_errors} (RK4, trapezoidal, AB3, TR-BDF2, ETD-RK2) are compared in this model.

\subsection{Gusto}
Gusto is a toolkit providing stable, accurate timestepping schemes for compatible finite element discretisations of the partial differential equations that govern geophysical fluid flows. The compatible finite element framework ensures that the fundamental vector calculus identity of $\nabla \cdot \nabla \times = 0$ is preserved by the discretisation, leading to an accurate representation of geostrophic modes; this is crucial for accurate numerical weather prediction \parencite{cotter_shipton_mixed_FE}. Gusto provides a variety of different compatible finite element spaces. We will use the pair of finite element function spaces $\text{BDM2-P1}_{\text{DG}}$ for the velocity and fluid depth, $D$, on a mesh with triangular elements. \par
Gusto provides a range of different timestepping schemes. Here, we use two explicit timesteppers --- RK4 and the strong stability preserving Runge-Kutta method of SSPRK3 --- along with an implicit midpoint (ImpMid) timestepper. We also use an iterated semi-implicit timestepping scheme similar to that in LFRic \parencite{James_Kent_LFRic,Melvin_gungho}; we  refer to this approach as the Semi-Implicit Quasi-Newton (SIQN) scheme. SIQN uses two loops: an inner loop to solve the fast forcing terms implicitly and avoid explicit timestepping restrictions, and an outer loop for the slower transport terms. A difference between the use of SIQN in the two models is that Gusto allows for the use of different timesteppers for transporting each field. This can be required for stability, as the order of the finite element function spaces is higher in Gusto than in LFRic. At the time of these results, Gusto applied the SIQN method with four iterations in the outer loop and one in the inner loop. For more discussion of SIQN in Gusto and pseudocode, see \textcite{hartney2024compatible}. \par

\subsection{LFRic}
LFRic is the next generation weather and climate model in development at the Met Office; a detailed overview of the shallow water component of the model is found in \textcite{James_Kent_LFRic}. Its newly designed dynamical core, GungHo \parencite{Melvin_gungho, melvin2024mixed}, uses a compatible finite element discretisation on lowest-order quadrilateral elements. The LFRic shallow water mini app is used for the results in this paper. This solves for prognostic variables of a velocity and geopotential of $\phi_L = gh$, using the mixed $\text{RT1-Q0}$ function space. LFRic's default timestepper is the SIQN approach, with two inner loops and two outer loops. Two explicit timesteppers, SSPRK3 and RK4, will also be examined.

\subsection{A spectral coefficient error metric}
We use an error metric that quantifies differences in the spectral coefficients of a timestepped solution. This approach elucidates inaccuracies in nonlinear interactions, as these are reflected in the energy distribution over the wavenumbers and modes. \par
We consider a Fourier space representation of a solution to the RSWEs, similar to (\ref{eq:spec_v}) but without the $\exp(i(\vec{k} \cdot \vec{x}))$ component,
\begin{equation}
    \vec{\hat{U}}(\vec{k},t) =  \sum_{\alpha \in \{ -1, 0, +1 \} } \sigma^{\alpha}_{\vec{k}} (t) ~\vec{r}^{\alpha}_{\vec{k}},
    \label{eq:spec_u}
\end{equation}

\noindent where $\hat{\vec{U}}(\vec{k}, t) = \mathcal{F} \{ \vec{U}(\vec{x}, t) \}$ denotes the spatially Fourier transformed solution. When using the symmetrised equations (\ref{eq:rswes_skew}) with a skew-Hermitian $\mathcal{L}$, the spectral amplitude of each mode can be computed using the approach in \textcite{Ward_Dewar}: 
\begin{equation}
    \label{eq:spectral_amp}
    \sigma^{\alpha}_{\vec{k}}(t) = \left( {\vec{r}^{\alpha}_{\vec{k}}} \right)^{*} \cdot \vec{\hat{U}}(t) .
\end{equation}

\noindent We are interested in the difference between spectral coefficients in the large timestepped solution, $\sigma_{\Delta t, \vec{k}}^{\alpha}$, relative to the reference solution generated with a small timestep, $\sigma_{\vec{k}}^{\alpha}$, at a specified time of $t_n$:
\begin{equation}
    \Delta \sigma_{\vec{k}}^{\alpha} (t_n)
    =
    \left\Vert \sigma_{\vec{k}}^{\alpha} (t_n) - \sigma_{\Delta t, \vec{k}}^{\alpha} (t_n) \right\Vert.
    \label{eq:spec_diff}
\end{equation}

\noindent Visualisations of $\Delta \sigma_{\vec{k}}^{\alpha}$ (Figures \ref{fig:test_case_2b_waveno_errors}, \ref{fig:gusto_waveno_errors}, \ref{fig:lfric_waveno_errors}) allow observations of which nonlinear interactions a timestepper is misrepresenting. In doing so, we can examine the accuracy in modelling specific direct and near-resonant triads. \par
We also compute a total spectral coefficient error metric for an easier comparison of many timesteppers and timestep sizes in a given model (Figures \ref{fig:Gaussian_pseudo_results}--\ref{fig:Gaussian_LFRic}, \ref{fig:Test_case_2_pseudo}, \ref{fig:Test_case_2_gusto}, \ref{fig:Test_case_2_lfric}). We sum the spectral differences over all wavenumbers and mode types, at $N_T$ equispaced sample time locations $t_n$, to find
\begin{equation}
    SE 
    =
    \frac{1}{N_T} \sum_{t_n} 
  \sum_{\vec{k} \in \mathbb{Z}^2} \sum_{\alpha \in \{-1,0,+1\} } \Delta \sigma_{\vec{k}}^{\alpha} (t_n).
    \label{eq:spec_error_metric}
\end{equation}

For the results in this paper, the reference solution for each numerical model is computed with RK4 and $\Delta t = 10^{-4} ~\text{s}$. Spectral errors for the pseudospectral method will only be computed within the region unaffected by aliasing ($|k|< (2/3) ~k_{\text{max}}, |l| < (2/3) ~l_{\text{max}}$). \par
As the spectral coefficient expression ($\ref{eq:spec_u}$) uses the eigenbasis for the symmetrical geopotential ($\vec{r}^{\alpha}_{\vec{k}}$ expressions of (\ref{eq:eigenvecs}), (\ref{eq:eigenvecs_K0}) in Appendix A), other elevation measures i.e. the depth field ($D$) from Gusto and geopotential height ($\phi_L$) from LFRic, first need to be converted to $\phi$. For Gusto, this is
\begin{equation}
    \label{eq:gusto_geopot_conversion}
    \phi = \sqrt{\frac{g}{H_0}} (D - H_0),  
\end{equation}

\noindent and for LFRic
\begin{equation}
    \label{eq:lfric_geopot_conversion}
    \phi = \sqrt{\frac{g}{H_0}} \left( \frac{\phi_L}{g} - H_0 \right) .
\end{equation}

\section{Test case 1: Gaussian initial conditions}
\label{section:gaussian_test_case}
The first test case considers a Gaussian height perturbation at a state of rest. Similarly to the one-dimensional example of Section \ref{subsec:RSWEs_effect_of_eps}, the dynamics of this Gaussian shape dispersing and reforming continues over the simulation. The time between consecutive reformations is primarily determined by the speed of the linear waves, as seen in Figure \ref{fig:effectofepsongaussian}. However, the nonlinear terms introduce a slow phase shift, analogous to that in Figure \ref{fig:Gauss_lin_nonlin}, which changes the frequency of these reformations. 

\subsection{Specifications}
We define a Gaussian height perturbation of peak value $\eta_G$ above a base height of $H_G$. This is located at the centre of a square domain of size $[0,L_{\vec{x}}] \times [0,L_{\vec{x}}]$, with periodic boundaries in both spatial dimensions. The decay rate of the perturbation is controlled by $\sigma_r$. The initial velocities are identically zero:
\begin{equation}
    u(0,x,y) = 0, ~v(0,x,y) = 0, ~h(0,x,y) = H_G + \eta_G  \exp \left( \frac{ -\left( x-\frac{L_{\vec{x}}}{2} \right)^2 - \left( y-\frac{L_{\vec{x}}}{2} \right)^2 }{\sigma_r} \right) .
\end{equation}

We apply this test case with $N_x = N_y = 32$ grid points or elements and $H_G = 2 ~\text{m}$. The pseudospectral model uses $L_{\vec{x}} =2 \pi ~\text{m}, \eta_G = 1 ~\text{m}, \sigma_r = 0.656 ~\text{m}^2$, and the finite element methods (Gusto, LFRic) use $L_{\vec{x}}=10 ~\text{m}, \eta_G=0.1 ~\text{m}, \sigma_r=1.66 ~\text{m}^2$. The pseudospectral model is run until $T_{\text{end}}=100 ~\text{s}$ and the finite element methods use $T_{\text{end}}=50 ~\text{s}$. Spectral errors are computed in each model using solution samples at every $\Delta T = 0.05 ~\text{s}$, with the total error metric being the mean of these evaluations (\ref{eq:spec_error_metric}).  \par
The pseudospectral model uses a mild hyperviscosity of order $(\nabla^2)^4$, with coefficient $\mu = 4 \times 10^{-12} ~\text{m}^8 ~\text{s}^{-1}$. The allowable timesteps in Gusto and LFRic are restricted relative to Courant numbers for the advective ($Cr_{\vec{u}}$) and gravity wave ($Cr_{c}$) speeds:
\begin{subequations}
\begin{align}
    Cr_{\vec{u}} &= \max_{u,v} \left( \frac{u \Delta t}{\Delta x} + \frac{v \Delta t}{\Delta y} \right), \\ 
    Cr_{c} &= \frac{c \Delta t}{\Delta x} = \frac{\Delta t}{\epsilon \Delta x} .
\end{align}
\label{eq:Cr_numbers}
\end{subequations}

\noindent For this test, the gravity wave Courant number scales with the timestep, as ${Cr_{c} = 32 \Delta t}$. As the advective Courant number is at least an order of magnitude smaller for all the simulations, only $Cr_{c}$ restricts the allowable timestep size. A range of larger timesteps are considered from $\Delta t = 5 \times 10^{-4} ~\text{s}$ to $ 5 \times 10^{-2} ~\text{s}$, which have respective $Cr_{c}$ of $0.016$ to $1.6$. \par

\subsection{Pseudospectral results}
The total spectral errors for the pseudospectral method are shown in Figure \ref{fig:Gaussian_pseudo_results}. AB3 has similarly large errors to the two implicit schemes of trapezoidal and TR-BDF2. These implicit methods are stable at the largest timestep but have significantly higher spectral errors than RK4 and ETD-RK2. TR-BDF2 performs better at smaller timesteps than the trapezoidal method, with the latter having a slightly lower error for the largest two timesteps. RK4 has the lowest error at the smallest three timesteps, which is likely a result of its higher order than ETD-RK2. There is a steeper increase in the RK4 error, such that ETD-RK2 performs slightly better at $\Delta t = 0.01 ~\text{s}$.  \par

\begin{figure}
    \centering
     \includegraphics[scale = 0.7]{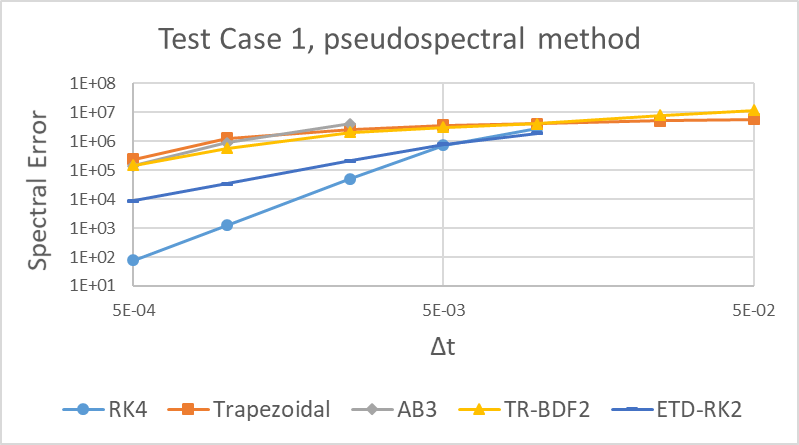}
    \caption{Spectral coefficient errors for the pseudospectral method in test case 1. RK4 performs best for the smallest timesteps, but ETD-RK2 overtakes this at the last explicit stable time step, due to a slower growth of errors with increasing $\Delta t$. TR-BDF2 has a lower error than the trapezoidal method for the smaller timesteps, but the latter is slightly better at the largest two timesteps.}
    \label{fig:Gaussian_pseudo_results}
\end{figure}

\subsection{Gusto results}
The two implicit approaches of SIQN and implicit midpoint are stable for all the considered step sizes, with the trade-off being a considerably larger error than the explicit methods of RK4 and SSPRK3 (Figure \ref{fig:Gaussian_gusto}). The implicit midpoint method has a lower error at the largest timestep, but for all other $\Delta t$ values, using different time discretisations for the transport of each field within the SIQN scheme produces a minimal difference in errors. RK4 outperforms SSPRK3, which is in line with its higher-order accuracy.  \par

\begin{figure}
    \centering
     \includegraphics[scale = 0.7]{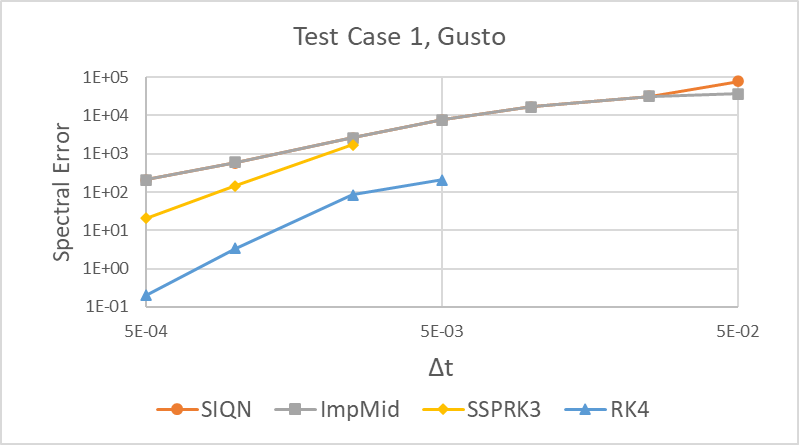}
    \caption{Spectral coefficient errors for Gusto in test case 1. The implicit methods (SIQN, implicit midpoint) have a much higher error than the explicit methods and perform very similarly, except at the largest timestep. RK4 has much lower errors than SSPRK3 for all its resolvable timesteps.}
    \label{fig:Gaussian_gusto}
\end{figure}

\subsection{LFRic results}
The semi-implicit (SIQN) method has a reasonably constant increase in spectral coefficient error with timestep on the logarithmic axes of Figure \ref{fig:Gaussian_LFRic}. So does SSPRK3, which has a much lower error at the smallest $\Delta t$, but a faster rate of error increase with timestep. This means that the SIQN and SSPRK3 errors are very similar at $\Delta t =0.01 ~\text{s}$. RK4 has very small errors for the three smallest timesteps, before a very rapid increase in errors at the next two largest timesteps. The relative performance of the schemes is consistent with their orders --- SIQN is second-order, SSPRK3 is third-order, and RK4 is fourth-order.\par

\begin{figure}
    \centering
     \includegraphics[scale = 0.7]{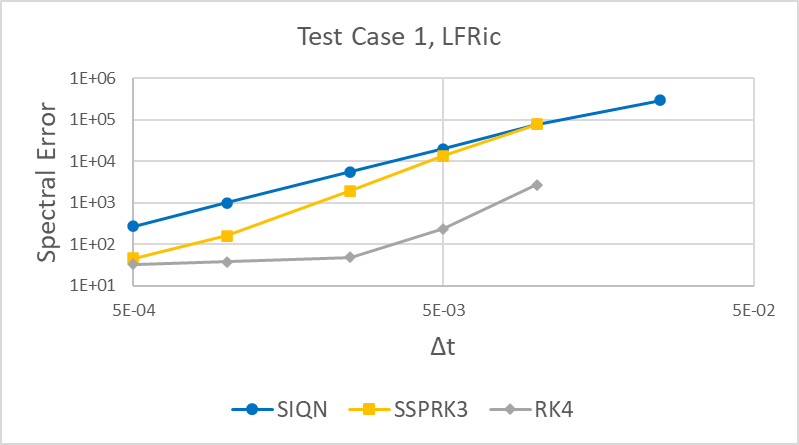}
    \caption{Spectral coefficient errors for LFRic in test case 1. SIQN and SSPRK3 both have relatively consistent increases in error. SSPRK3 has a lower error for smaller $\Delta t$ but a faster rate of error increase than SIQN. RK4 is the most accurate until its stability limit, but has a rapid upturn in error for its last two timesteps.}
    \label{fig:Gaussian_LFRic}
\end{figure}

\section{Test case 2: Triadic initial conditions}
\label{section:triad_test_case}

We now move to the second type of test case, where individual linear waves are selected to excite specific triadic interactions. This examines timestepping accuracy in the presence of low-frequency dynamics resulting from direct- or near-resonant triads. Linear waves of a specific wavenumber and mode are constructed using the RSWEs eigenmode basis (Appendix A). This approach has enabled studies of triadic energy exchanges at different $\Omega$ \parencite{Alex_Owen_thesis} and the interplay of gravity waves with potential vorticity \parencite{Ward_Dewar}. The ability to trigger specific dynamical features by choosing the initial waves makes this a versatile test. We illustrate this with two different superpositions of linear waves in test cases 2a and 2b. Both scenarios contain a slow transition from linear to nonlinear dynamics as the initial waves interact nonlinearly through triads. These interactions occur between different combinations of mode types (recall Section \ref{subsec:rswe_triad_interactions} and Table \ref{table:triad_list}). \par

\subsection{Specifications}
A linear wave of wavenumber $\vec{k}$ and mode $\alpha$ can be constructed by projecting the corresponding eigenfunction onto physical space, $\vec{m}^{\alpha}_{\vec{k}} = Re \{ \exp(i(k x+l y)) ~\vec{r}_{\vec{k}}^{\alpha} \}$. The doubly periodic $f$-plane expressions for $\vec{m}^{\alpha}_{\vec{k}}$ are explicitly stated in Appendix A. For this test case, the wavenumbers are scaled relative to the domain length, as $k,l = 2 \pi z/ L_{\vec{x}}, ~z \in \mathbb{Z}$. This ensures the same number of periods are contained in the square domain irrespective of its size, e.g., a $\vec{k}=(5,0)$ wave contains five periods in the $x$-dimension for any value of $L_{\vec{x}}$. Chosen waves are given a nonzero initial amplitude, $\sigma^{\alpha}_{\vec{k}} (0)$, with the majority of the spectral coefficients remaining as zero. The initial condition is the superposition of the selected fast and slow waves, similarly to (\ref{eq:spec_v}),
\begin{equation}
    \vec{U}(x,y,0) = \sum_{\vec{k} \in \mathbb{Z}^2} \sum_{\alpha \in \{ -1, 0, +1 \} } \sigma^{\alpha}_{\vec{k}} (0) ~\vec{m}^{\alpha}_{\vec{k}} .
    \label{eq:triad_test_case_IC}
\end{equation}

\noindent As the $\vec{m}^{\alpha}_{\vec{k}}$ expressions are derived for the symmetrical geopotential, a conversion to the model-specific elevation prognostic variable is required, i.e. using relationship (\ref{eq:gusto_geopot_conversion}) for Gusto and (\ref{eq:lfric_geopot_conversion}) for LFRic.  \par
The two versions of the triadic test case are run with a spatial resolution of $N_x = N_y = 64$ (grid points or elements), using the same domain sizes ($L_{\vec{x}}$) for each model as in test case 1. A total simulation time of $T_{\text{max}}=50 ~\text{s}$ is used, with spectral coefficient errors computed using (\ref{eq:spec_error_metric}) from samples every $\Delta T = 0.1 ~\text{s}$. Initial amplitudes of $\sigma^{\alpha}_{\vec{k}}(0) = 0.1 $ are used. Larger magnitudes instigate more triadic energy exchange but make it more challenging numerically, i.e., requiring smaller timesteps. \par
For the pseudospectral model, a hyperviscosity coefficient of $\mu = 10^{-10} ~\text{m}^8 ~\text{s}^{-1}$ is used, along with a 2/3 de-aliasing procedure. For the finite element methods, the finer 64-by-64 spatial resolution means that there are stricter gravity wave Courant numbers than in the first test case: $Cr_{c} = 64 \Delta t$. With the same timestep range of $\Delta t = 5 \times 10^{-4} ~\text{s}$ to $ 5 \times 10^{-2} ~\text{s}$, the respective $Cr_{c}$ are $0.032$ to $3.2$. With these larger Courant numbers, the explicit timesteppers in both finite element models are now unstable at their largest stable timestep for test case 1. \par

\subsection{Test case 2a}
Case 2a initialises one fast wave of $\Psi_1 = (5,0,+)$ and one slow wave of $\Psi_2 = (-5,5,S)$. This is similar to a simulation in \textcite{Ward_Dewar} to analyse the effect of a slow mode on the redistribution of fast inertia-gravity wave energy. \par
The initial profile being a superposition of only two linear waves can be visualised in physical space (Figure \ref{fig:test_case_2a_IC}). Each field ($u, v, h$) contains five local maximums in the $x$ and $y$ dimensions. In the full RSWEs, the shapes and magnitudes of these maximums change during the simulation, whereas in a linear version, the fields move but the initial condition profiles are preserved. This is shown for the height field in Figure \ref{fig:test_case_2a_lin_nonlin}. \par

\begin{figure}
    \centering
     \includegraphics[scale = 0.48]{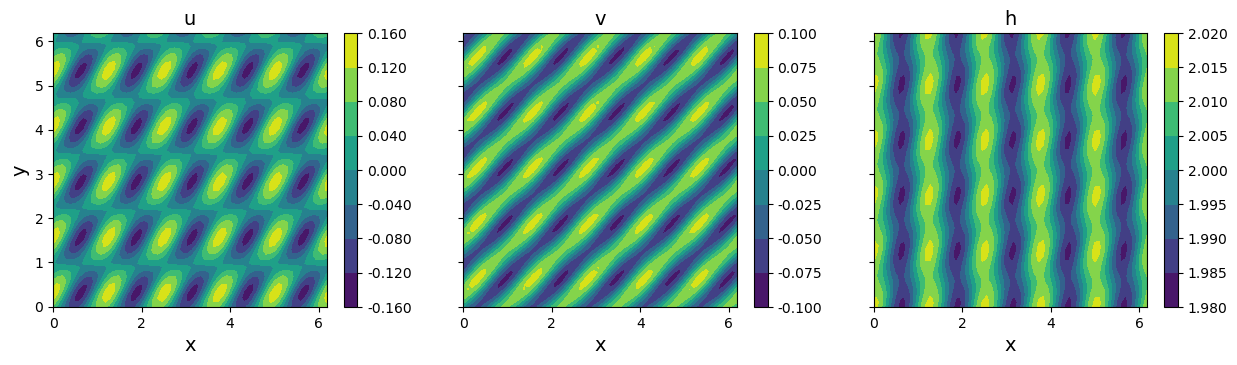}
     \caption[Test case 2a initial condition]{Test case 2a initial condition. These visualisations are of the fields obtained in the pseudospectral model.}
    \label{fig:test_case_2a_IC}
\end{figure}

\begin{figure}[htbp]
     \centering
     \begin{subfigure}{0.49\textwidth}
         \centering
         \includegraphics[width=\textwidth]{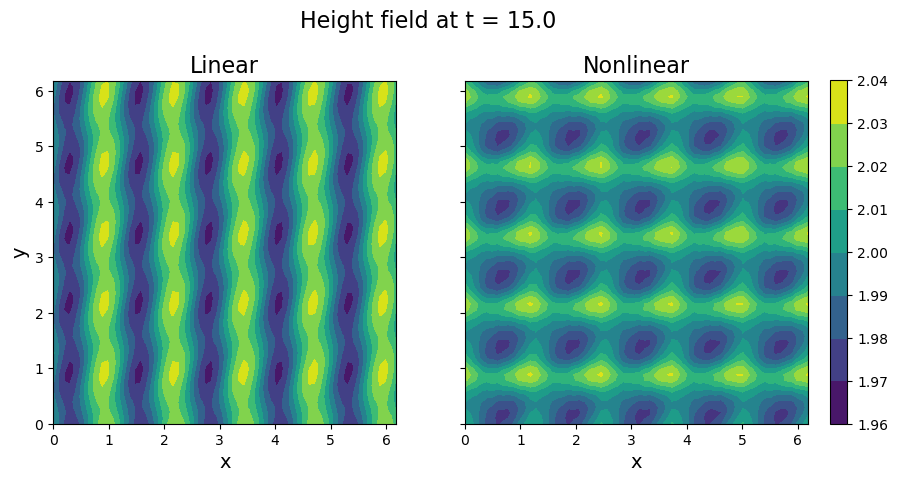}
         \caption{$t = 15 ~\text{s}$}
     \end{subfigure}
     \hfill
     \begin{subfigure}{0.49\textwidth}
         \centering
         \includegraphics[width=\textwidth]{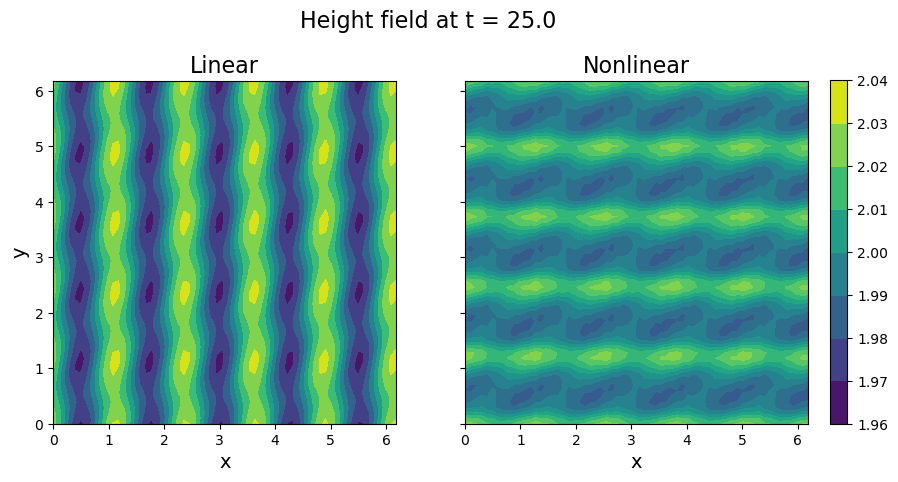}
         \caption{$t = 25 ~\text{s}$}
    \end{subfigure}
    \caption{The height field for test case 2a, at $t=15 ~\text{s}$ (a) and $t=25 ~\text{s}$ (b), for linear (left) and nonlinear (right) versions of the RSWEs. The height field in the linear case looks similar at both times, whereas there is a change in the shape and magnitude of the maximums in the nonlinear system.}
    \label{fig:test_case_2a_lin_nonlin}
\end{figure}

The presence of triadic interactions in this test case is highlighted by plotting spectral amplitudes, $\sigma^{\alpha}_{\vec{k}}$, in wavenumber space. Figure \ref{fig:OFOS_waveno_space} shows the distribution of spectral energy in the three modes at the start and end times of the simulation. The initial state in Figure \ref{sub_fig:OFOS_waveno_space_start} only contains initial energy at values of ($\vec{k}, \alpha$) corresponding to $\Psi_1$ and $\Psi_2$. The first noticeable triadic interaction is a self-excitation of the fast wave into a secondary fast wave at twice the wavenumber, $\Psi_1 + \Psi_1 = \Psi_4 = (10,0,+)$. This is a near-resonant FFF (type \textit{iii}) interaction, which reinforces that not only direct resonances are of importance to the computation of triadic errors. After a slightly longer time, we observe the expected FSF (type \textit{ii}) direct resonance with a third (fast) wave, $\Psi_1 + \Psi_2 = \Psi_3 = (0,5,+)$. At $T_{\text{max}}$, the majority of the energy in $\Psi_1$ has moved to $\Psi_3$. SFS (type iv) interactions at larger triadic frequencies also play a role in shifting the energy in the slow modes, e.g., $\Psi_2 + \Psi_4 = \Psi_5 = (5,5,S)$. This results in many more slow waves in the final state in Figure \ref{sub_fig:OFOS_waveno_space_final}.
As the initial wavenumbers are all integer multiples of five, the only triadic interactions in an accurate simulation involve wavenumbers that adhere to this constraint.

\begin{figure}[htpb]
     \centering
     \begin{subfigure}[t]{\textwidth}
         \centering
         \includegraphics[width=0.9\textwidth]{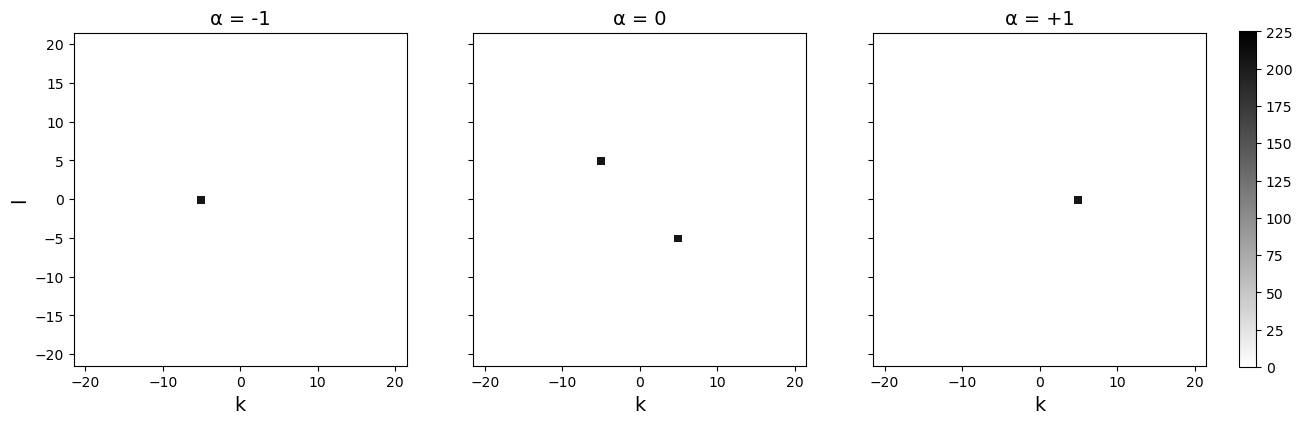}
         \caption{Spectral amplitudes in the initial state}
         \label{sub_fig:OFOS_waveno_space_start}
     \end{subfigure}
     \vskip\baselineskip
     \begin{subfigure}[t]{\textwidth}
         \centering
         \includegraphics[width=0.9\textwidth]{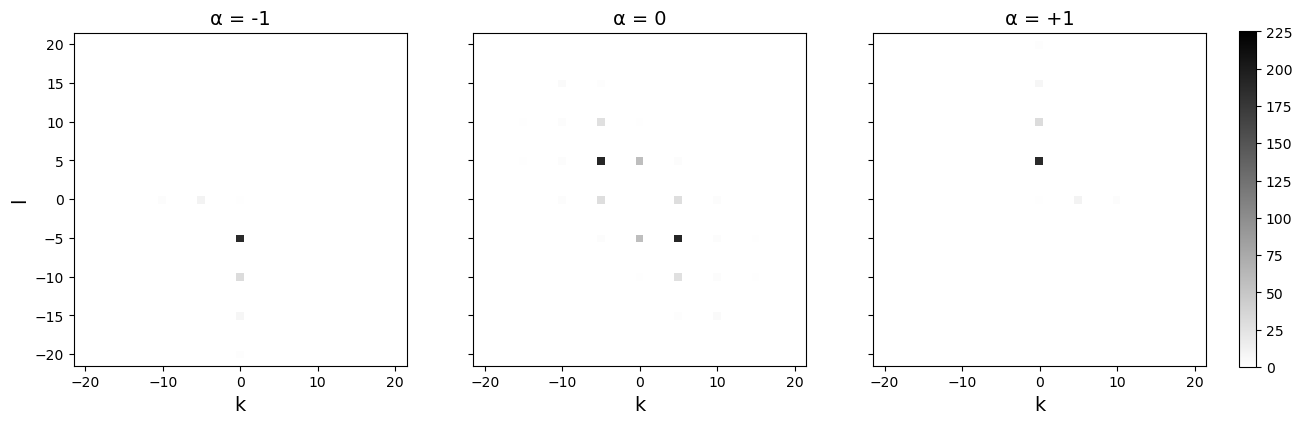}
         \caption{Spectral amplitudes in the final state}
         \label{sub_fig:OFOS_waveno_space_final}
     \end{subfigure}
        \caption[Initial and final spectral amplitudes in test case 2a]{Magnitudes of the spectral coefficients in spectral space for each mode, in test case 2a, at the initial (a) and final (b) simulation times. These spectra are computed from the reference pseudospectral solution. The three panels in each figure, from left to right, show the fast negative, slow, and fast positive modes. Nonzero spectral amplitudes are in greyscale, with black indicating the largest spectral amplitude. The initial state only contains energy for the selected two linear waves. However, four ($\vec{k}, \alpha$) combinations contain initial energy; this is because the spectral coefficients obey Hermitian symmetry of $\left( \sigma_{\vec{k}}^{\alpha} \right)^* = \sigma_{-\vec{k}}^{-\alpha}$ to ensure that the prognostic variables are real-valued. At $T_{\text{end}}$, the majority of fast wave energy has been exchanged in a directly resonant triadic interaction, from $\Psi_1$ to $\Psi_3$.  A larger number of slow modes in the final state contain energy due to higher triadic frequency interactions.}
    \label{fig:OFOS_waveno_space}
\end{figure}

\subsection{Test case 2b}
The second triadic test case contains two fast waves and a cluster of nine, low wavenumber, slow modes. The fast waves of $\Psi_1$ is retained from case 2a, with the other having an equivalent total wavenumber, $\Psi_6 = (-3,4,+)$. A larger number of slow waves, from the different combinations of $k,l \in \{ -1,0,1 \}$, leads to the initial state in physical space in Figure \ref{fig:test_case_2b_IC}. The addition of many low wavenumber slow waves has previously been used to investigate the role of near-resonances on energy redistribution \parencite{Alex_Owen_thesis}. \par

\begin{figure}
    \centering
     \includegraphics[scale = 0.48]{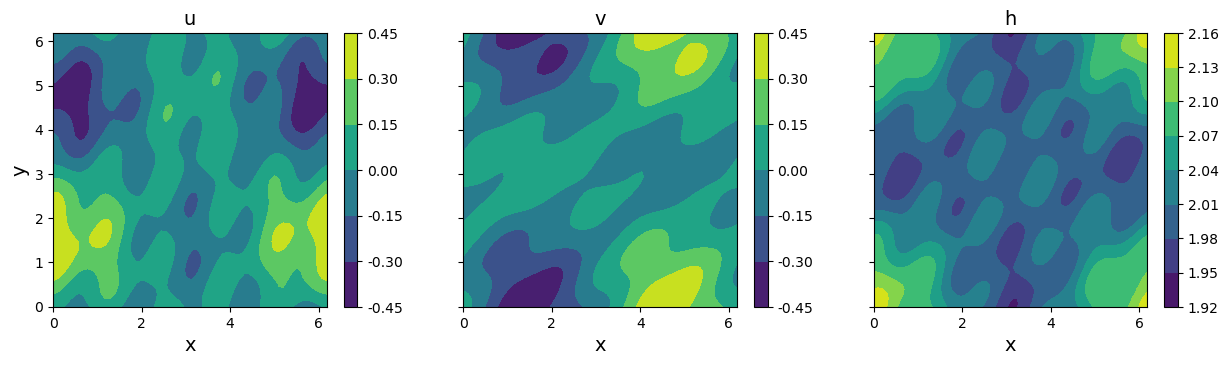}
    \caption[Test case 2b initial condition]{Test case 2b initial condition. These visualisations are of the fields obtained in the pseudospectral model.}
    \label{fig:test_case_2b_IC}
\end{figure}

The presence of more slow mode waves allows for a wider redistribution of the fast wave energy. Due to the smaller wavenumbers of the slow modes, directly resonant triads cannot form in the initial state; this energy exchange is purely due to near-resonances. In wavenumber space, this causes a ring of energy to appear in the fast modes at a locus of $K=5$ (Figure \ref{sub_fig:TwoFSCent_waveno_space_final}). Secondary rings also form at integer multiples of five, i.e. $K=10, 15, 20$, due to energy redistribution at wavenumbers excited by self-resonances, such as $\Psi_4$. \par
Although initialising a single fast wave is sufficient to observe this ring formation, using two allows for this energy redistribution to be observed after shorter simulation times. Larger initial amplitudes also allow for larger energy transfers and more distinct rings. However, this increases the numerical difficulty of the simulation; for example, a pseudospectral model requires a larger hyperviscosity to compensate.

\begin{figure}
     \centering
     \begin{subfigure}[t]{\textwidth}
         \centering
         \includegraphics[width=0.9\textwidth]{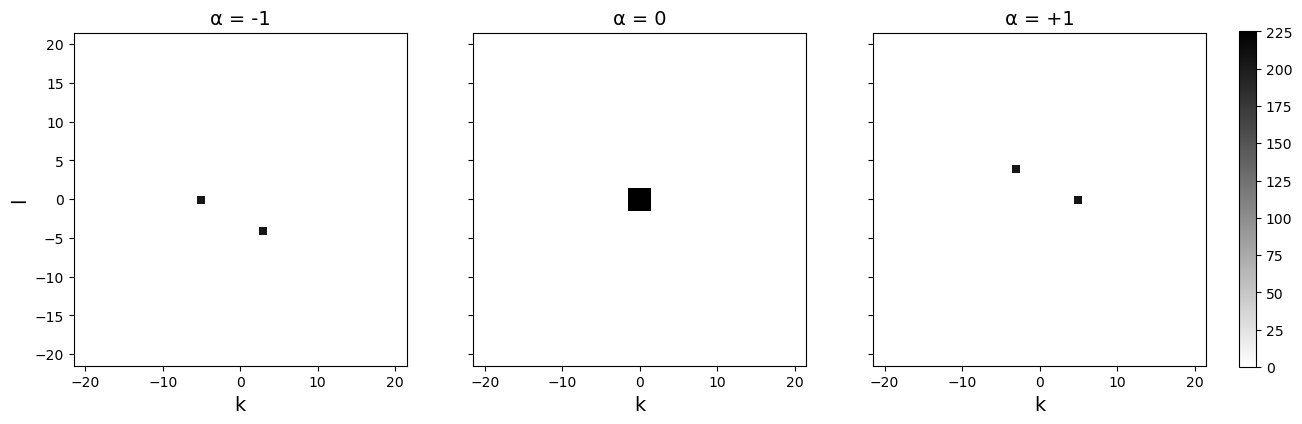}
         \caption{Spectral amplitudes in the initial state}
         \label{sub_fig:TwoFSCent_waveno_space_start}
     \end{subfigure}
     \vskip\baselineskip
     \begin{subfigure}[t]{\textwidth}
         \centering
         \includegraphics[width=0.9\textwidth]{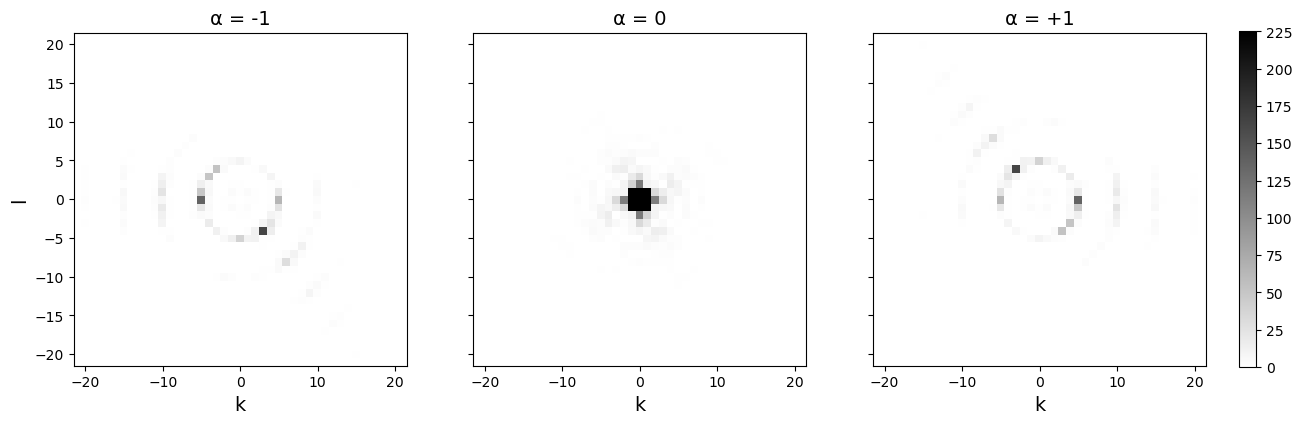}
         \caption{Spectral amplitudes in the final state}
         \label{sub_fig:TwoFSCent_waveno_space_final}
     \end{subfigure}
        \caption[Initial and final spectral amplitudes in test case 2b]{Magnitudes of the spectral coefficients in spectral space for each mode, in test case 2b, at the initial (a) and final (b) simulation times. These spectra are computed from the reference pseudospectral solution. The initial state in (a) contains two fast waves and nine slow ones. This allows for the redistribution of fast mode energy into rings in (b), due to near-resonant triadic interactions.}
    \label{fig:TwoFSCent_waveno_space}
\end{figure}

\subsection{Pseudospectral results}
In both cases 2a and 2b, the explicit schemes of RK4 and ETD-RK2 generate the lowest spectral coefficient error (Figure \ref{fig:Test_case_2_pseudo}). RK4 has a lower error for the smallest two timesteps but has a larger error increase with $\Delta t$ such that ETD-RK2 has the smaller error at the largest stable explicit timestep of $\Delta t = 0.005 ~\text{s}$. ETD-RK2 has a better performance relative to the other schemes in test case 2a, which uses fewer initial waves. AB3 has a similar amount of spectral error to the trapezoidal scheme. For both cases, TR-BDF2 has a smaller spectral error than the trapezoidal method at the finest three timesteps. The trapezoidal scheme performs better than TR-BDF2 for the largest timesteps; this is by a larger degree in case 2a than in 2b. \par
In Figure \ref{fig:test_case_2b_waveno_errors} we present visualisations of fast mode spectral differences ($\Delta \sigma_{\vec{k}}^{\alpha=-1} + \Delta \sigma_{\vec{k}}^{\alpha=+1} $) using (\ref{eq:spec_diff}) for test case 2b. These are computed at $T_{\text{max}}$ from solutions at a common timestep of $\Delta t =0.005 ~\text{s}$. AB3 is not shown, as this stepsize is beyond its stability limit. The errors are mainly constrained to the primary (inner) and secondary (second, third, fourth) fast wavenumber rings. For RK4, the largest errors are in the second ring, whilst for ETD-RK2 they are in the primary ring. RK4 and TR-BDF2 also have more noticeable errors around the self-resonance of the (-3,4,+) wave into the secondary rings, which is around the line of $l = -4k/3$. 

\begin{figure}
     \centering
     \begin{subfigure}[t]{0.48\textwidth}
         \centering
         \includegraphics[scale=0.48]{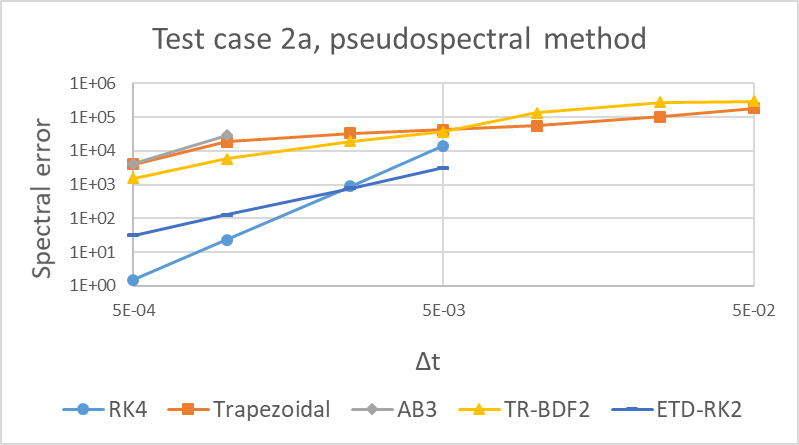}
         \caption{Test Case 2a}
     \end{subfigure}
     \hfill
     \begin{subfigure}[t]{0.48\textwidth}
         \centering
         \includegraphics[scale=0.48]{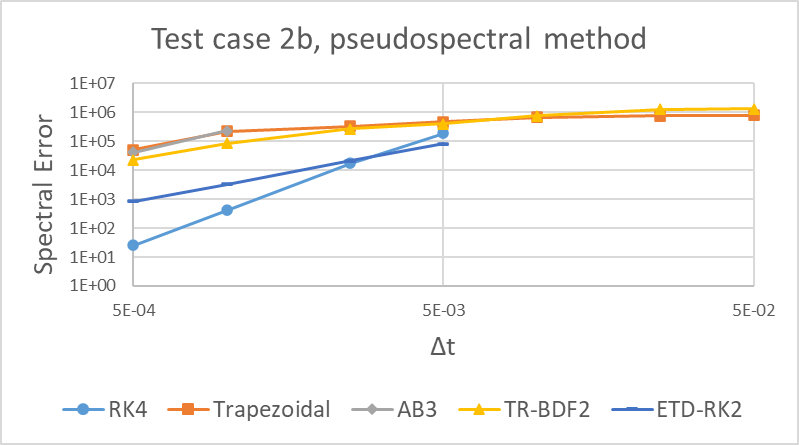}
         \caption{Test Case 2b}
     \end{subfigure}
        \caption{Spectral coefficient errors in test cases 2a and 2b for the pseudospectral method. Like with the Gaussian case, RK4 has the lowest error at the two smallest $\Delta t$, but ETD-RK2 has the lower error at the largest stable explicit timestep. The trapezoidal method is more accurate than TR-BDF2 at the largest timesteps for case 2a, which has a higher proportion of directly resonant triadic interactions.}
    \label{fig:Test_case_2_pseudo}
\end{figure}

\begin{figure}
    \centering
     \includegraphics[scale = 0.5]{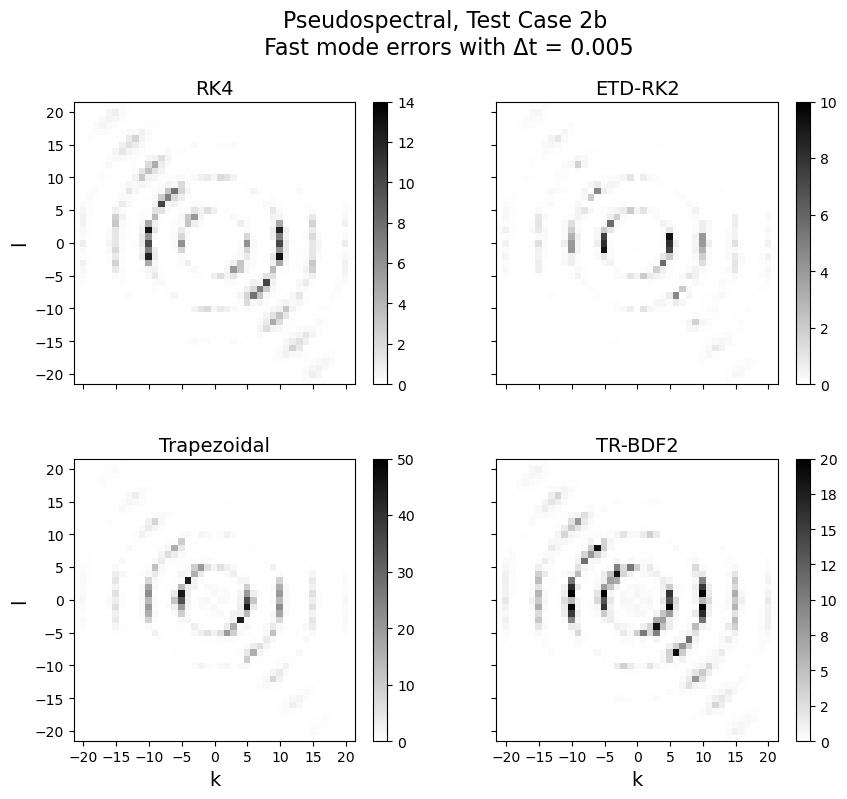}
    \caption[]{Spectral coefficient errors in the fast modes ($\Delta \sigma_{\vec{k}}^{\alpha=-1} + \Delta \sigma_{\vec{k}}^{\alpha=+1} $) for pseudospectral timesteppers at the end time of case 2b. The visible energy differences occur in the primary and secondary rings. The relative size of errors in the primary ring versus secondary rings depends on the time discretisation choice.}
    \label{fig:test_case_2b_waveno_errors}
\end{figure}

\subsection{Gusto results}
Like in the Gaussian case, the two explicit methods have the lowest errors and RK4 is a couple of orders of magnitude more accurate than SSPRK3 (Figure \ref{fig:Test_case_2_gusto}). The implicit methods have similar errors for the smallest timesteps, but SIQN has a lower error than implicit midpoint for the largest timesteps in case 2a. This is in contrast to the Gaussian case, where SIQN was less accurate at the largest timestep. We examine this further in Figure \ref{fig:gusto_waveno_errors} where we plot $\Delta \sigma_{\vec{k}}^{\alpha}$ for the fast modes in case 2a for SIQN and implicit midpoint. At a timestep of $\Delta t =0.005 ~\text{s}$, the spectral differences are indistinguishable. However, at this largest timestep of $\Delta t = 0.05 ~\text{s}$, implicit midpoint has a large amount of error in the $(\pm5, 0) $ wavenumbers whereas SIQN has a lower total error that is spread over a wider range of wavenumbers.

\begin{figure}
     \centering
     \begin{subfigure}[t]{0.48\textwidth}
         \centering
         \includegraphics[scale=0.48]{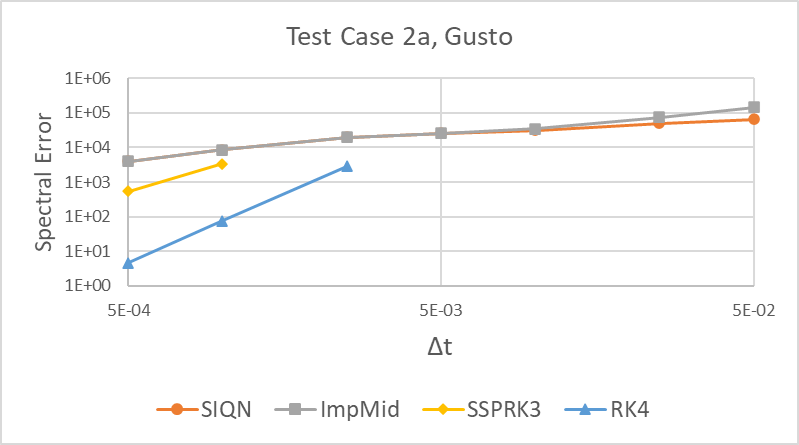}
         \caption{Test Case 2a}
     \end{subfigure}
     \hfill
     \begin{subfigure}[t]{0.48\textwidth}
         \centering
         \includegraphics[scale=0.48]{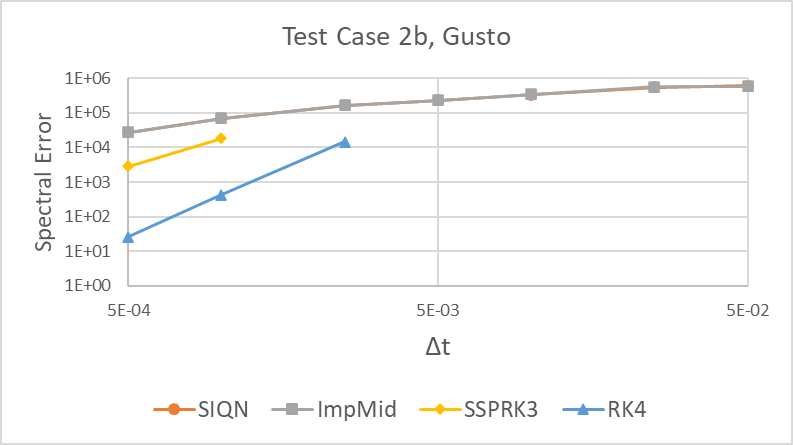}
         \caption{Test Case 2b}
     \end{subfigure}
        \caption{Spectral coefficient errors in test cases 2a and 2b for Gusto. The RK methods perform best at the smallest two timesteps in both cases (a) and (b), with RK4 having more accuracy in line with it being higher-order. SIQN has an accuracy improvement over the implicit midpoint method for the largest timesteps of test case 2a, which might be due to the higher proportion of directly resonant triadic interactions. }
    \label{fig:Test_case_2_gusto}
\end{figure}

\begin{figure}
    \centering
     \includegraphics[scale = 0.5]{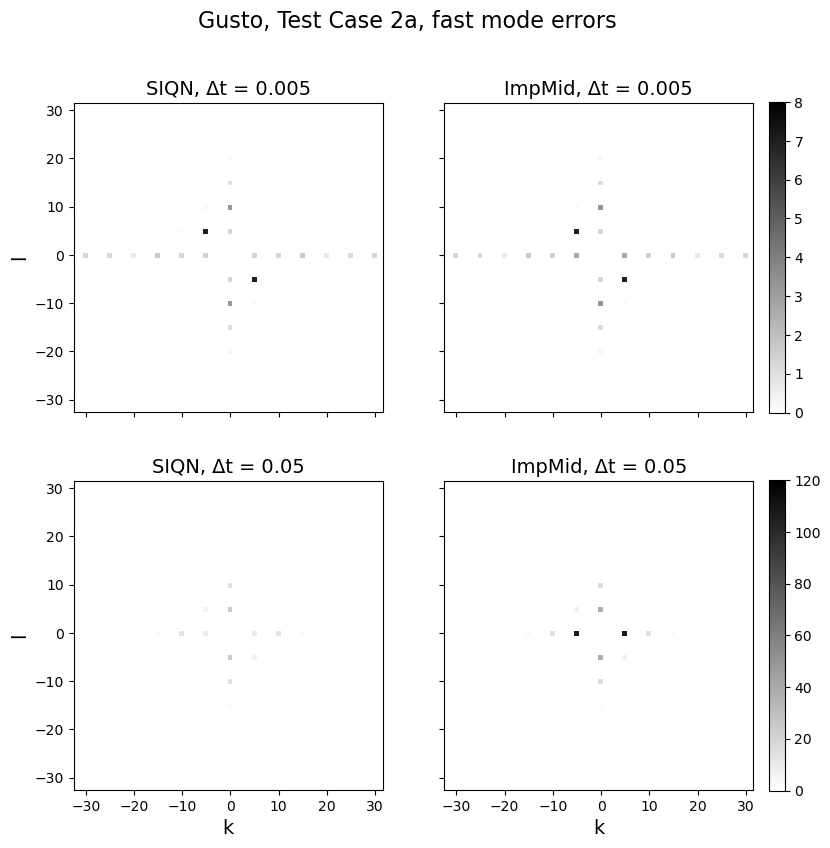}
    \caption[]{Spectral coefficient errors in the fast modes ($\Delta \sigma_{\vec{k}}^{\alpha}$) for Gusto timesteppers in test case 2a. Solutions with the two implicit methods and timesteps of $\Delta t = 0.005 ~\text{s}$ and $\Delta t = 0.05 ~\text{s}$ are compared at $T_{\text{max}}$. At the smaller timestep, the spectral coefficient errors are virtually identical. However, at the largest timestep, SIQN has the spectral errors spread over a wider range of wavenumbers, whilst implicit midpoint has a larger total error that is concentrated in the ($\pm 5, 0$) wavenumbers.}
    \label{fig:gusto_waveno_errors}
\end{figure}

\subsection{LFRic results}
The spectral errors reduce when moving from SIQN, to SSPRK3, to RK4 (Figure \ref{fig:Test_case_2_lfric}). The relative rates of error increase are similar to the Gaussian test case, with SSPRK3 having about the same error as SIQN at its largest stable timestep. RK4 again has a rapid increase in error for its largest timesteps. Interestingly, the SIQN method is stable for more timesteps in the case with fewer waves (case 2a), whereas in case 2b it cannot compute a stable solution at $\Delta t \geq 0.025 ~\text{s}$. \par

\begin{figure}
     \centering
     \begin{subfigure}[t]{0.48\textwidth}
         \centering
         \includegraphics[scale=0.48]{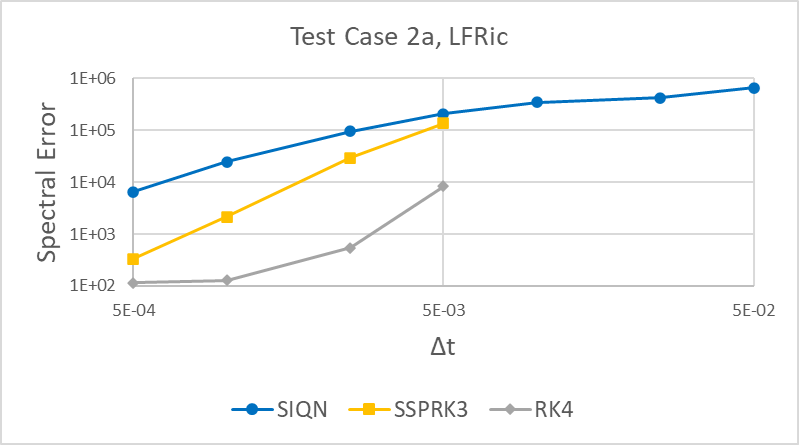}
         \caption{Test Case 2a}
     \end{subfigure}
     \hfill
     \begin{subfigure}[t]{0.48\textwidth}
         \centering
         \includegraphics[scale=0.48]{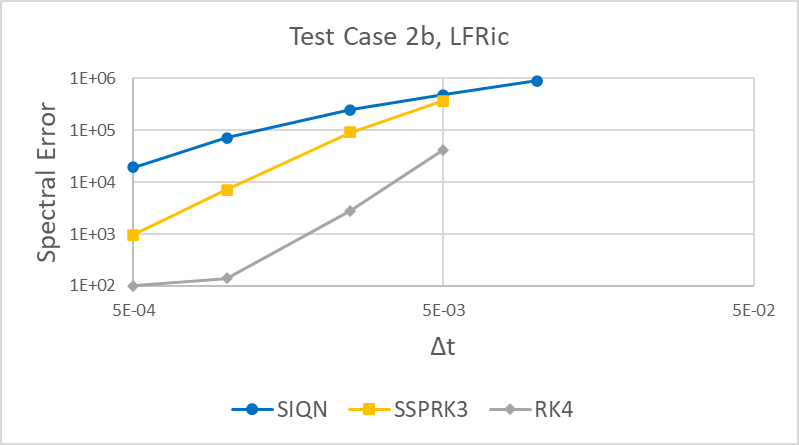}
         \caption{Test Case 2b}
     \end{subfigure}
        \caption{Spectral coefficient errors in test cases 2a and 2b for LFRic. The spectral errors are similar for both cases, except that SIQN has a stricter stability limit when there are more initial waves and nonlinear interactions in (b).}
    \label{fig:Test_case_2_lfric}
\end{figure}

In Figure \ref{fig:lfric_waveno_errors} we plot $\Delta \sigma_{\vec{k}}^{\alpha}$ at $T_{\text{max}}$ in the fast modes of case 2b for the timestep sizes of $\Delta t =0.001 ~\text{s}$ and $ \Delta t = 0.005 ~\text{s}$. At the smaller timestep, SIQN commits a higher proportion of error in the secondary ring, whilst RK4 has more error in the primary ring. When taking a larger timestep, the predominant errors for SIQN shift to the primary ring, whilst RK4's move outward to the secondary rings.

\begin{figure}
    \centering
     \includegraphics[scale = 0.5]{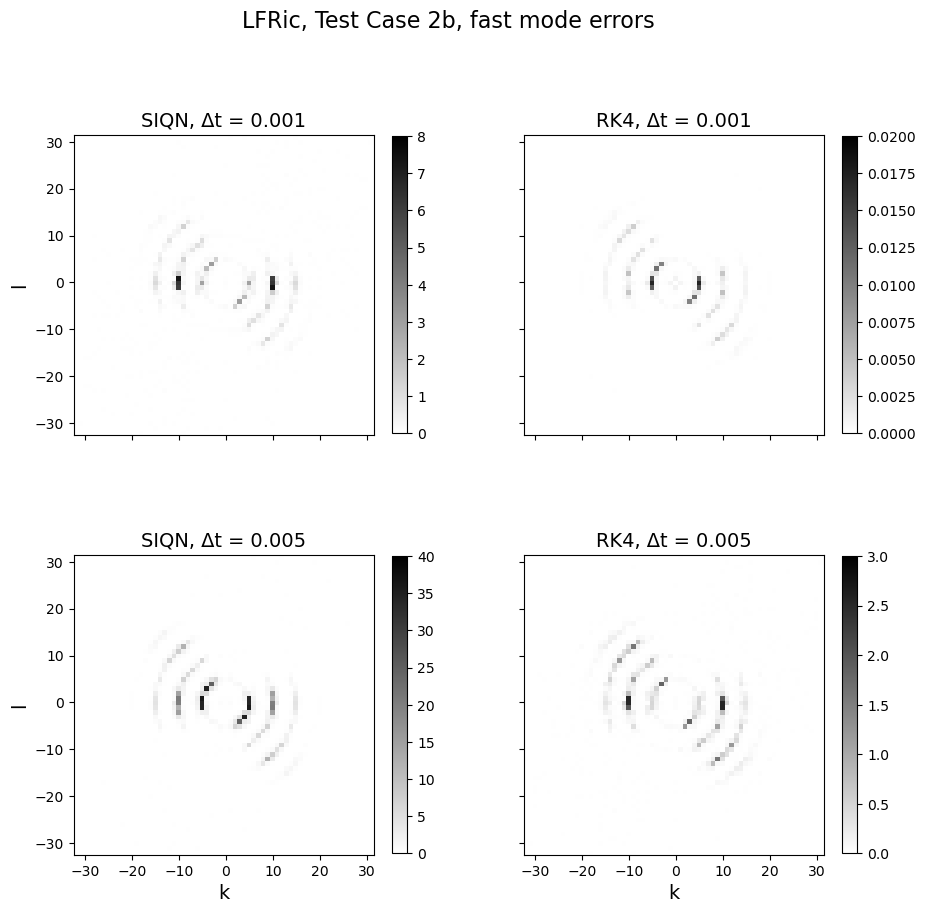}
    \caption[]{Spectral coefficient errors in the fast modes $\left( \Delta \sigma_{\vec{k}}^{\alpha=-1} + \Delta \sigma_{\vec{k}}^{\alpha=+1}  \right)$ for LFRic in case 2b. The SIQN and RK4 timesteppers are compared at $T_{\text{max}}$. With a timestep of $\Delta t =0.001$, SIQN has a larger proportion of errors in the secondary ring whilst RK4 has more in the primary ring. At a larger timestep of $\Delta t =0.005$, this trend flips, with SIQN having more error in the primary ring and RK4 in the secondary rings.}
    \label{fig:lfric_waveno_errors}
\end{figure}

\clearpage

\section{Conclusions and future work}
\label{section:conclusions}
For the next generation of time evolution models, such as for weather and climate applications, a key decision will be the choice of timestepping method and timestep size that ensures accurate solutions. In this paper, we have investigated nonlinear errors from the time discretisation when taking larger timesteps. The first part of this paper studied how linear dispersive errors impact the long time evolution of the RSWEs through inaccuracies in the frequency of the triadic interactions. Our new triadic error reflects how fast wave errors can lead to misrepresenting the slow nonlinear dynamics. \par
As the triadic frequency is not the only source of error in the spectral evolution equation (\ref{eq:sigmaeqn}), the triadic error can be extended to consider more components of the nonlinear dynamics. One idea is to consider errors in the interaction coefficients $\left( C_{\mathbf{k}_{a},\mathbf{k}_{b},\mathbf{k}}^{\alpha_{a},\alpha_{b},\alpha} \right)$ which dictate the coupling between triads. Another extension is to use a more accurate $\beta$-plane approximation for the Coriolis force, which leads to a nonzero wave speed for the slow $\alpha=0$ mode. This means that the triadic error would consider linear dispersive errors in the slow modes, and thus the slow-slow-slow interaction (type \textit{i}) becomes important to analyse. \par
The second part of this paper presented two test cases for the $f$-plane RSWEs. These initial conditions focus on a slowly developing, yet critically important, nonlinear component. The application of these initial conditions to pseudospectral and finite element (Gusto and LFRic) models shows that these cases can be used in a range of models and spatial discretisations. In the Gaussian test case, there is a low-frequency phase shift due to the nonlinearity that may be missed with large timesteps. The triadic tests also lead to slowly developing nonlinear dynamics, resulting from interactions of the initialised linear waves. The triadic cases can be made more challenging by using larger initial amplitudes. This increases the strength of the nonlinear interactions and energy exchanges, which enables the formation of more distinct rings in spectral space in case 2b. There is much potential for developing additional triadic test cases, given that there are many combinations of linear waves that could be used. \par
We are interested in using our triadic error framework and test cases to examine alternative timesteppers, especially ones that have the accuracy of explicit methods with larger stability regions like implicit methods. One such timestepper is the phase-averaged method, which first uses an exponential mapping to the modulation variable space (\ref{eq:map}) and then applies a local phase-average for larger explicit timestepping \parencite{Haut_Wingate_2014,peddle_haut_wingate_2019,hiroe_colin}. Another future direction is examining how other model choices, such as physics schemes and spatial discretisations, affect the timestepper comparisons within that model. \par

\section*{Appendix A: Eigenmodes of the $f$-plane RSWEs}
\label{appendix:eigenmodes}
To obtain a relationship between the frequencies and wavenumbers, we take the RSWEs with a skew-symmetric linear operator (\ref{eq:rswes_skew}) and represent these in Fourier space, 
\begin{equation}
    \frac{\partial \vec{\hat{U}}}{\partial t} + \hat{\mathcal{L}} \vec{\hat{U}} = \mathcal{F} \{ \mathcal{N}(\vec{U},\vec{U}) \}, 
\end{equation}
\begin{equation}
  \frac{\partial}{\partial t}  
\begin{bmatrix}
\hat{u} \\
\hat{v} \\
\hat{\phi}
\end{bmatrix}
+
\begin{bmatrix}
0 & -f & i c k\\
f & 0 & i c l\\
i c k & i c l & 0\\
\end{bmatrix}
\begin{bmatrix}
\hat{u} \\
\hat{v} \\
\hat{\phi}
\end{bmatrix}
= \mathcal{F} 
\Bigg\{ -
\begin{bmatrix}
u\frac{\partial u}{\partial x} + v \frac{\partial u}{\partial y}\\
u \frac{\partial v}{\partial x} + v  \frac{\partial v}{\partial y} \\
\frac{\partial }{\partial x} (u \phi) + \frac{\partial}{\partial y} (v \phi) 
\end{bmatrix}
\Bigg\} ,  
\end{equation}

\noindent where $c = \sqrt{g H_0}$ as the irrotational, linear, wave frequency, and $\mathcal{ F\{ \} }$ denotes the spatial Fourier transform. Solving the eigenvalue problem for the skew-Hermitian $\mathcal{\hat{L}}$ generates the dispersion relations of (\ref{eq:rswe_disp_rel}):
$$\omega_{\vec{k}}^{\alpha} = \alpha \sqrt{f^2 + c^2 K^2}, ~\alpha \in \{ -1, 0, +1 \}.$$

\noindent The slow wave mode is associated with $\alpha = 0$, and $\alpha = \pm 1$ correspond to the fast, inertia-gravity modes, which propagate in opposite directions. The eigenvectors of these modes are, for $K \neq 0$: 
\begin{equation}
    \vec{r}^{\alpha=-1}_{\vec{k}}
=
\frac{1}{\sqrt{2} K \psi}
\begin{bmatrix}
-k \psi + i f l \\
-l \psi - i f k \\
c K^2 
\end{bmatrix},
\ \ \ 
\vec{r}^{\alpha=0}_{\vec{k}}
=
\frac{1}{\psi}
\begin{bmatrix}
-i c l \\
i c k \\
f
\end{bmatrix},
\ \ \
\vec{r}^{\alpha=+1}_{\vec{k}}
=
\frac{1}{\sqrt{2} K \psi}
\begin{bmatrix}
k \psi + i f l \\
l \psi - i f k \\
c K^2 
\end{bmatrix} ,
\label{eq:eigenvecs}
\end{equation}
where, 
$$ \psi = \sqrt{f^2 + c^2 K^2}, \ \ K=\sqrt{k^2+l^2}.$$

\noindent For the case of a zero wavenumber, these become: \par
\begin{equation}
\vec{r}^{\alpha=-1}_{K=0}
=
\begin{bmatrix}
-\frac{i}{\sqrt{2}} \\
\frac{1}{\sqrt{2}}  \\
0
\end{bmatrix},
\ \ \ 
\vec{r}^{\alpha=0}_{K=0}
=
\begin{bmatrix}
0 \\
0 \\
1
\end{bmatrix},
\ \ \
\vec{r}^{\alpha=+1}_{K=0}
=
\begin{bmatrix}
\frac{i}{\sqrt{2}}  \\
\frac{1}{\sqrt{2}}  \\
0
\end{bmatrix} .
\label{eq:eigenvecs_K0}
\end{equation}

We now construct the physical space eigenmodes by taking the real part of the eigenmode projected onto the doubly periodic $f$-plane, $\vec{m}^{\alpha}_{\vec{k}} = \text{Re} \{ \exp(i(k x+l y)) ~\vec{r}_{\vec{k}}^{\alpha}   \}$:

\begin{subequations}
\begin{align}
    \vec{m}^{\alpha=-1}_{\vec{k}} 
&=
\text{Re}
\left\{e^{i(k x+l y)} \vec{r}_{\vec{k}}^{-1} 
\right\}
=
\frac{1}{\sqrt{2} K \psi}
\begin{bmatrix}
-k \psi \cos(kx+ly) - l f \sin(kx+ly) \\
-l \psi \cos(kx+ly) + k f \sin(kx+ly) \\
c K^2 \cos(kx+ly)
\end{bmatrix},
\\
\vec{m}^{\alpha=0}_{\vec{k}}
&=
\text{Re} 
\left\{
e^{i(k x+l y)} \vec{r}_{\vec{k}}^{0} 
\right\}
=
\frac{1}{\psi}
\begin{bmatrix}
l c \sin(kx+ly) \\
-k c \sin(kx+ly) \\
f \cos(kx+ly)
\end{bmatrix},
\\
\vec{m}^{\alpha=+1}_{\vec{k}}
&=
\text{Re} 
\left\{
e^{i(k x+l y)} \vec{r}_{\vec{k}}^{+1}
\right\}
=
\frac{1}{\sqrt{2} K \psi}
\begin{bmatrix}
k \psi \cos(kx+ly) - l f \sin(kx+ly) \\
l \psi \cos(kx+ly) + k f \sin(kx+ly) \\
c K^2 \cos(kx+ly)
\end{bmatrix} .
\end{align}
\end{subequations}

\section*{Appendix B: Triadic error proofs}
\label{appendix:triadic_error_proofs}
For more analysis of the order of triadic errors, we consider the fast-slow-fast (FSF/type \textit{ii}) triad. This triad is the only choice of the six interaction types in Table \ref{table:triad_list} that can construct direct- and near-resonances within a discretised domain. Letting $\omega_1$ and $\omega_3$ be the two fast wave frequencies, the triadic frequency is:
\begin{equation}
    \Omega = \omega_1 + 0 - \omega_3 = \omega_1 - \omega_3.
\end{equation}

\noindent This triad is directly resonant if $ \omega_1 = \omega_3 = \omega$, i.e. the total wavenumber of the two fast waves must be equal. If $ \omega_1 \neq \omega_3 $, then the triad is non-resonant. \par
Following (\ref{eq:num_triad_prop}), the numerical approximation to the triadic propagator of this interaction is,

\begin{equation}
    \mathcal{T}_N = P(i \omega_1 \Delta t) P(0) P(- i \omega_3 \Delta t),
\end{equation}

\noindent whilst the true triadic propagator is
\begin{equation}
    \mathcal{T}_{\Delta t} = e^{i \Omega \Delta t}.
\end{equation}

\noindent For the direct resonances,
\begin{equation}
    \mathcal{T}_{\Delta t}(\Omega = 0, \Delta t) = e^{0} = 1,
\label{eq:directly_resonant_T}
\end{equation}

\noindent and for the non-resonances,
\begin{equation}
    \mathcal{T}_{\Delta t}(\Omega \neq 0, \Delta t) = e^{i( (\omega_1 - \omega_3) \Delta t) } =  \sum_{n=0}^{\infty} \frac{(i (\omega_1 - \omega_3) \Delta t)^n}{n!}.
\label{eq:non_resonant_T}
\end{equation}

\begin{theorem}
    For non-resonant ($\Omega \neq 0$) FSF interactions, a k-order explicit Runge-Kutta method will generate a $\mathcal{O}\left( (\Delta t)^{k+1} \right)$ triadic error.
\end{theorem}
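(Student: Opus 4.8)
The plan is to expand both the exact discretised triadic propagator $\mathcal{T}_{\Delta t}$ and its numerical approximation $\mathcal{T}_N$ as power series in $\Delta t$ and to locate the first term at which the two disagree. First I would recall that the $k$-stage, $k$-order explicit RK stability polynomial (\ref{eq:P_RK}) is precisely the degree-$k$ truncation of the exponential, so $P(W) = e^{W} - R_k(W)$ with remainder $R_k(W) = \sum_{j\geq k+1} W^j/j! = \tfrac{W^{k+1}}{(k+1)!}\bigl(1 + \mathcal{O}(W)\bigr)$. Writing $W_1 = i\omega_1\Delta t$ and $W_3 = i\omega_3\Delta t$ and using $P(0)=1$, the numerical triadic propagator for the FSF triad is $\mathcal{T}_N = P(W_1)\,P(0)\,P(-W_3) = P(W_1)P(-W_3)$.

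Next I would multiply out $\mathcal{T}_N = \bigl(e^{W_1} - R_k(W_1)\bigr)\bigl(e^{-W_3} - R_k(-W_3)\bigr)$, obtaining $\mathcal{T}_N = e^{W_1 - W_3} - e^{-W_3}R_k(W_1) - e^{W_1}R_k(-W_3) + R_k(W_1)R_k(-W_3)$. Since $\mathcal{T}_{\Delta t} = e^{i\Omega\Delta t} = e^{W_1 - W_3}$ holds exactly, the leading exponentials cancel in the difference and
\[
\mathcal{T}_{\Delta t} - \mathcal{T}_N = e^{-W_3}R_k(W_1) + e^{W_1}R_k(-W_3) - R_k(W_1)R_k(-W_3).
\]
Because $\omega_1,\omega_3$ are real, $W_1$ and $W_3$ are purely imaginary, so $|e^{W_1}| = |e^{-W_3}| = 1$, while $|R_k(W_1)| = \tfrac{|\omega_1\Delta t|^{k+1}}{(k+1)!}(1 + \mathcal{O}(\Delta t))$ and similarly for $R_k(-W_3)$. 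Taking the norm in (\ref{eq:triadic_error}) and applying the triangle inequality then yields $E \leq |R_k(W_1)| + |R_k(-W_3)| + |R_k(W_1)|\,|R_k(-W_3)| = \mathcal{O}\bigl((\Delta t)^{k+1}\bigr)$, which is the claimed bound; note the quadratic remainder term is in fact $\mathcal{O}((\Delta t)^{k+2})$ and hence irrelevant to the order.

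To confirm the order is sharp rather than accidentally higher, I would then isolate the leading coefficient: collecting the $(\Delta t)^{k+1}$ contributions gives $E = \tfrac{(\Delta t)^{k+1}}{(k+1)!}\bigl|\omega_1^{k+1}e^{-i\omega_3\Delta t} + (-1)^{k+1}\omega_3^{k+1}e^{i\omega_1\Delta t}\bigr| + \mathcal{O}((\Delta t)^{k+2})$, whose modulus tends to $|\omega_1^{k+1} + (-1)^{k+1}\omega_3^{k+1}|$ as $\Delta t\to 0$. Using the dispersion relation (\ref{eq:rswe_disp_rel}), which forces $|\omega_1|,|\omega_3|\geq |f|>0$, together with the non-resonance hypothesis $\omega_1\neq\omega_3$, one checks this is strictly positive in every case, so by continuity the coefficient stays bounded away from zero for small $\Delta t$ and the triadic error is exactly $\Theta((\Delta t)^{k+1})$.

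The main obstacle I anticipate is this last verification: showing the leading coefficient cannot vanish requires a short case split on the parity of $k$ and on whether the two fast waves lie on the same branch ($\alpha_1=\alpha_3$, so $\omega_1,\omega_3$ have the same sign, distinct total wavenumbers) or on opposite branches ($\alpha_1=-\alpha_3$, so $\omega_1,\omega_3$ have opposite signs and the sum $\omega_1^{k+1} + (-1)^{k+1}\omega_3^{k+1}$ has a different cancellation structure). Everything else is routine power-series bookkeeping, and the cancellation of all terms below order $k+1$ is guaranteed a priori because each factor $P$ agrees with $e^{W}$ to order $k$, so the first surviving discrepancy is necessarily at order $k+1$.
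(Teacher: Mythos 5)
Your proof is correct, but it takes a different route through the algebra than the paper. The paper multiplies the two truncated exponentials as a double (Cauchy) sum, re-indexes by the total degree $e=a+b$, and applies the binomial theorem to recognise the degree-$\leq k$ part as the truncation of $e^{i(\omega_1-\omega_3)\Delta t}$, leaving an explicit remainder term $A\sim\mathcal{O}((\Delta t)^{k+1})$; that explicit $A$ is then reused in Theorem 2 to exhibit the extra cancellation for direct resonances. You instead write each stability polynomial as $P(W)=e^{W}-R_k(W)$, cancel the leading exponentials exactly via $e^{W_1}e^{-W_3}=e^{i\Omega\Delta t}$, and bound the cross terms using $|e^{\pm i\omega\Delta t}|=1$ and the triangle inequality --- no re-indexing or binomial identity needed. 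Both arguments rest on the same key fact (the explicit $k$-stage, $k$-order RK polynomial is the degree-$k$ Taylor truncation of $e^{W}$), but your decomposition buys two things: it is shorter, and it isolates the leading coefficient $\frac{1}{(k+1)!}\left|\omega_1^{k+1}+(-1)^{k+1}\omega_3^{k+1}\right|$, which lets you show the order $k+1$ is sharp for non-resonant triads (the paper only claims the upper bound). Your anticipated ``obstacle'' is in fact routine: for $k$ odd, $k+1$ is even and the coefficient is a sum of even powers of frequencies bounded below by $f>0$; for $k$ even, $k+1$ is odd, $x\mapsto x^{k+1}$ is injective on $\mathbb{R}$, and $\omega_1\neq\omega_3$ (non-resonance) gives nonvanishing, regardless of whether the two fast waves sit on the same or opposite branches. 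Note also that your leading coefficient is exactly the quantity $B_{k+1}$ the paper derives in Theorem 2, so your factorisation would streamline that proof as well.
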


\begin{proof}

Use the Runge-Kutta linear dispersion relation (\ref{eq:P_RK}) for the individual waves in $\mathcal{T}_N$:
\begin{align}
     \mathcal{T}_N &= 
\left( \sum_{a=0}^{k} \frac{(i \omega_1 \Delta t)^a}{a!} \right) (1) \left( \sum_{b=0}^{k} \frac{(-i \omega_3 \Delta t)^b}{b!} \right), \nonumber \\
\rightarrow \mathcal{T}_N &= 
\sum_{a=0}^{k} \sum_{b=0}^{k} \frac{(i \omega_1 \Delta t)^a}{a!}   \frac{(-i \omega_3 \Delta t)^b}{b!} .
\end{align}

\noindent Now, introduce a new variable, $e=a+b$. We rewrite the summations in $a,b$ as a summation over $e$, for the combinations of $a$ and $b$ that satisfy the definition of $e$. As $a,b \in [0,k]$, $e \in [0, 2k]$:
\begin{equation}
    \mathcal{T}_N = 
\sum_{e=0}^{2k} \sum_{a+b=e} \frac{(i \omega_1 \Delta t)^a}{a!}   \frac{(-i \omega_3 \Delta t)^b}{b!}.
\end{equation}

\noindent Now, we split the sum into two parts. When $e \in [0, k]$, it must be that $a,b \in [0,e]$. When $e \in [k+1,2k]$, we have that $a,b \in [e-k,k]$. Replacing $b=e-a$ and summing over $e$ and $a$,
\begin{align}
    \mathcal{T}_N &= \sum_{e=0}^{k} \sum_{a=0}^{e} \frac{(i \omega_1 \Delta t)^a}{a!}   \frac{(-i \omega_3 \Delta t)^{e-a}}{(e-a)!} + \sum_{e=k+1}^{2k} \sum_{a=e-k}^{k} \frac{(i \omega_1 \Delta t)^a}{a!}   \frac{(-i \omega_3 \Delta t)^{e-a}}{(e-a)!}, \nonumber \\
    \rightarrow \mathcal{T}_N &= \sum_{e=0}^{k} \frac{1}{e!} \sum_{a=0}^{e} \frac{e!}{a!(e-a)!} (i \omega_1 \Delta t)^a (-i \omega_3 \Delta t)^{e-a} + A ,
\label{append_eq:triadic_proof_1_split_with_A}
\end{align}

\noindent where $A \sim \mathcal{O} \left( (\Delta t)^{k+1} \right)$ is a remainder term of
\begin{equation}
    A = \sum_{e=k+1}^{2k}  {\sum_{a=e-k}^{k}  \frac{(i \omega_1)^a}{a!}   \frac{(-i \omega_3)^{e-a}}{(e-a)!} (\Delta t)^{e} }.
\end{equation}

\noindent Now, we can simplify the first term in (\ref{append_eq:triadic_proof_1_split_with_A}) with the use of the binomial theorem, which states that  
\begin{equation}
    (x+y)^n = \sum_{a=0}^{n} \frac{n!}{a! (n-a)!} x^a y^{n-a} .
\label{append_eq:binomial}
\end{equation}

\noindent This results in the following:
\begin{align}
    \mathcal{T}_N &= \sum_{e=0}^{k} \frac{(i \omega_1 \Delta t - i \omega_3 \Delta t)^e}{e!} + A, \nonumber \\
    \rightarrow \mathcal{T}_N &= \sum_{e=0}^{k} \frac{(i (\omega_1 - \omega_3) \Delta t)^e}{e!} + A. \label{append_eq:triadic_proof_1_TN}
\end{align}

Comparing this expression with the true triadic propagator of a non-resonant FSF interaction (\ref{eq:non_resonant_T}) shows that the Runge-Kutta approximation is a truncation of the analytical expression with an $\mathcal{O}\left( (\Delta t)^{k+1} \right)$ error from the $A$ term.

\end{proof}

\begin{theorem}
   For directly resonant ($\Omega = 0$) FSF interactions, an even-order explicit Runge-Kutta method has an $\mathcal{O}\left( (\Delta t)^{k+2} \right)$ triadic error, while an odd-order method has an $\mathcal{O}\left( (\Delta t)^{k+1} \right)$ triadic error. 
\end{theorem}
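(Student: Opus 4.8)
The plan is to exploit the fact that for a directly resonant FSF triad the two fast frequencies coincide, $\omega_1=\omega_3=\omega$ with $\omega=\sqrt{f^2+c^2K^2}\neq 0$. Writing $W=i\omega\Delta t$ and using $P(0)=1$, the numerical triadic propagator collapses to $\mathcal{T}_N = P(W)\,P(0)\,P(-W)=P(W)P(-W)$, while $\mathcal{T}_{\Delta t}=e^{i\Omega\Delta t}=1$ by (\ref{eq:directly_resonant_T}). The triadic error is therefore $\Vert Q(W)-1\Vert$ with $Q(W):=P(W)P(-W)$. The crucial structural observation is that $Q$ is a polynomial in $W$ that is \emph{even}: $Q(-W)=P(-W)P(W)=Q(W)$. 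Hence every odd-degree coefficient of $Q(W)-1$ vanishes identically, and this parity is what produces the gap between odd and even $k$.

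Next I would expand around the exact exponential. Since the $k$-order explicit RK stability polynomial (\ref{eq:P_RK}) is the degree-$k$ Taylor truncation, write $P(W)=e^{W}-R(W)$ with $R(W)=\sum_{j\ge k+1}W^{j}/j! = W^{k+1}/(k+1)! + W^{k+2}/(k+2)!+\cdots$. Then
\begin{equation*}
Q(W)-1 = P(W)P(-W) - e^{W}e^{-W} = -e^{W}R(-W) - e^{-W}R(W) + R(W)R(-W),
\end{equation*}
where the last term is $\mathcal{O}(W^{2k+2})$, hence negligible for $k\ge 1$. Collecting the lowest-order contributions of $-e^{W}R(-W)-e^{-W}R(W)$, the coefficient of $W^{k+1}$ is $\big((-1)^{k}-1\big)/(k+1)!$.

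For odd $k$ this coefficient is $-2/(k+1)!\neq 0$, so $Q(W)-1 = -\tfrac{2}{(k+1)!}W^{k+1}+\mathcal{O}(W^{k+2})$ and, substituting $W=i\omega\Delta t$, the triadic error is $\Theta\big((\Delta t)^{k+1}\big)$. For even $k$ the coefficient vanishes — forced anyway by the evenness of $Q$, since $k+1$ is then odd — so the leading term sits at $W^{k+2}$. Extracting its coefficient from $e^{W}R(-W)$ and $e^{-W}R(W)$ (each contributing $\tfrac{1}{(k+2)!}-\tfrac{1}{(k+1)!}$ with the appropriate signs) gives $\tfrac{2}{(k+1)!}-\tfrac{2}{(k+2)!}=\tfrac{2(k+1)}{(k+2)!}\neq 0$, so $Q(W)-1 = \tfrac{2(k+1)}{(k+2)!}W^{k+2}+\mathcal{O}(W^{k+4})$ (the $W^{k+3}$ term again vanishes by parity) and the triadic error is $\Theta\big((\Delta t)^{k+2}\big)$. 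Using $\omega\neq 0$ confirms these orders are attained, not merely upper bounds.

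The only genuine bookkeeping is isolating the $W^{k+2}$ coefficient in the even case; everything else follows from the parity of $P(W)P(-W)$ together with the Taylor-truncation form of $P$. As sanity checks one can note $Q(W)-1=W^{4}/4$ for RK2 and $Q(W)-1=W^{6}/72+\mathcal{O}(W^{8})$ for RK4, matching $k+2=4$ and $k+2=6$, while RK1 gives $-W^{2}$ and RK3 gives $-W^{4}/12+\mathcal{O}(W^{6})$, matching $k+1=2$ and $k+1=4$.
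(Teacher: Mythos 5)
Your proof is correct, and it takes a genuinely different route from the paper's. The paper continues the machinery of Theorem 1: it keeps $\omega_1,\omega_3$ general, rewrites the remainder $A$ via the binomial theorem into a truncated exponential in $\omega_1-\omega_3$ plus a new remainder $B$, and then inspects the leading term $B_{k+1}\propto\bigl(\omega_1^{k+1}+(-1)^{k+1}\omega_3^{k+1}\bigr)$, which cancels exactly when $\omega_1=\omega_3$ and $k$ is even. You instead specialise to the resonant case at the outset, observe that $\mathcal{T}_N=P(W)P(-W)$ is an even function of $W$ so every odd-degree error coefficient vanishes identically, and expand $P=e^{W}-R$ with $R$ the Taylor tail to extract the leading coefficients explicitly. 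Your parity argument gives a structural explanation of why even $k$ gains an order (the would-be $W^{k+1}$ term has odd degree), and by computing the nonzero coefficients $-2/(k+1)!$ and $2(k+1)/(k+2)!$ (with $\omega\neq 0$) you actually establish that the stated orders are attained, i.e.\ sharp, which the paper's argument does not explicitly verify for the even case; your worked examples for RK1--RK4 confirm these coefficients. What the paper's longer bookkeeping buys in exchange is continuity with Theorem 1 and an expression valid for $\omega_1\neq\omega_3$, so it simultaneously exhibits how the extra order of accuracy is lost as the triad departs from exact resonance, whereas your argument, as written, applies only to the directly resonant case. The only small points to make explicit are that $R(W)R(-W)=\mathcal{O}(W^{2k+2})$ is negligible only for $k\ge 1$ (no issue, since $k\ge 1$ for any RK scheme) and that $P(0)=1$ is what removes the slow-mode factor, which you do state.
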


\begin{proof}
Investigate the $A$ term from Theorem 1:
$$ A = \sum_{e=k+1}^{2k} \sum_{a=e-k}^{k} \frac{(i \omega_1 \Delta t)^a}{a!}   \frac{(-i \omega_3 \Delta t)^{e-a}}{(e-a)!}. $$
We extend the range of the summation in $a$ to $a \in [0,e]$ and subtract these added components:
\begin{align}
A = \sum_{e=k+1}^{2k} \Bigg( \sum_{a=0}^{e} \frac{(i \omega_1 \Delta t)^a}{a!}   \frac{(-i \omega_3 \Delta t)^{e-a}}{(e-a)!} - &\sum_{a=0}^{e-k-1} \frac{(i \omega_1 \Delta t)^a}{a!}   \frac{(-i \omega_3 \Delta t)^{e-a}}{(e-a)!} \nonumber \\
- &\sum_{a=k+1}^{e} \frac{(i \omega_1 \Delta t)^a}{a!}   \frac{(-i \omega_3 \Delta t)^{e-a}}{(e-a)!} \Bigg) .
\end{align}

\noindent We can apply the binomial theorem (\ref{append_eq:binomial}) to the first term in this sum:
\begin{align}
 A = &\sum_{e=k+1}^{2k}  \frac{(i (\omega_1 - \omega_3) \Delta t)^e}{e!} \nonumber  \\
 - &\sum_{e=k+1}^{2k} \Bigg( \sum_{a=0}^{e-k-1}\frac{(i \omega_1 \Delta t)^a}{a!}   \frac{(-i \omega_3 \Delta t)^{e-a}}{(e-a)!} - \sum_{a=k+1}^{e} \frac{(i \omega_1 \Delta t)^a}{a!}   \frac{(-i \omega_3 \Delta t)^{e-a}}{(e-a)!} \Bigg). 
\end{align}
 
\noindent We can then rewrite the numerical triadic propagator of Theorem 1 (\ref{append_eq:triadic_proof_1_TN}) as
\begin{equation}
    \mathcal{T}_N = \sum_{e=0}^{2k} \frac{(i (\omega_1 - \omega_3) \Delta t)^e}{e!} + B,
\label{append_eq:triadic_proof_2_split_with_B}
\end{equation}

\noindent where $B \sim \mathcal{O}\left( (\Delta t)^{k+1} \right)$ is
\begin{align}
    B &= - \sum_{e=k+1}^{2k} \left( \sum_{a=0}^{e-k-1} \frac{(i \omega_1 \Delta t)^a}{a!}   \frac{(-i \omega_3 \Delta t)^{e-a}}{(e-a)!} + \sum_{a=k+1}^{e} \frac{(i \omega_1 \Delta t)^a}{a!}   \frac{(-i \omega_3 \Delta t)^{e-a}}{(e-a)!} \right), \nonumber \\
     \rightarrow B &= - \sum_{e=k+1}^{2k} \left( \sum_{a=0}^{e-k-1}\frac{\omega_1^a \omega_3^{e-a}}{a!(e-a)!} (i)^e (-1)^{e-a} (\Delta t)^e + \sum_{a=k+1}^{e} \frac{\omega_1^a \omega_3^{e-a}}{a!(e-a)!} (i)^e (-1)^{e-a} (\Delta t)^e \right).
\end{align}

\noindent The first term in 
(\ref{append_eq:triadic_proof_2_split_with_B}) is a truncation of $\mathcal{T}_{\Delta t}$ up to $\mathcal{O}\left( (\Delta t)^{2k} \right)$, however, there are still lower-order errors appearing in $B$. We can consider the leading-order error in $B$, which occurs when $e = k + 1$,
\begin{align}
    B_{k+1} =  &\sum_{a=0}^{0} \frac{\omega_1^a \omega_3^{k+1-a}}{a!(k+1-a)!} (i)^{k+1} (-1)^{k+1-a} (\Delta t)^{k+1}, \nonumber \\
    + &\sum_{a=k+1}^{k+1} \frac{\omega_1^a \omega_3^{k+1-a}}{a!(k+1-a)!} (i)^{k+1} (-1)^{k+1-a} (\Delta t)^{k+1}, \nonumber \\
    \rightarrow B_{k+1} =  &- \frac{\omega_3^{k+1}}{(k+1)!} (i)^{k+1} (-1)^{k+1} (\Delta t)^{k+1} - \frac{\omega_1^{k+1} }{(k+1)!} (i)^{k+1} (\Delta t)^{k+1}, \nonumber \\
    \rightarrow B_{k+1} =  &- \frac{(i)^{k+1}}{(k+1)!} ( \omega_1^{k+1} + (-1)^{k+1} \omega_3^{k+1} ) \ (\Delta t)^{k+1}.
\end{align}

\noindent $B_{k+1}$ will reduce to zero if $( \omega_1^{k+1} + (-1)^{k+1} \omega_3^{k+1} ) = 0$. This occurs when both $\omega_1 = \omega_3$, which is the case for a direct resonance, and $(-1)^{k+1} = -1$, which is true for an even value of $k$. Thus,

$$B_{k+1} = 
\begin{cases}
0, & \text{if $k$ is even}, \\
- \frac{2 (i)^{k+1}}{(k+1)!} \omega^{k+1} (\Delta t)^{k+1}, & \text{if $k$ is odd}.
\end{cases}$$

\noindent Hence, the cancellation of $\mathcal{O}\left( (\Delta t)^{k+1} \right)$ terms for an even-order RK scheme leaves it with a higher order of truncation error than the odd-order schemes,
$$B \sim
\begin{cases}
\mathcal{O}\left( (\Delta t)^{k+2} \right), & \text{if $k$ is even}, \\
\mathcal{O}\left( (\Delta t)^{k+1} \right), & \text{if $k$ is odd}.
\end{cases}$$

\end{proof}

\begin{theorem}
A generalised Euler method generates zero triadic error for direct FSF resonances, if and only if, $\alpha = 0.5$.
\end{theorem}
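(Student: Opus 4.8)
The plan is a direct computation: evaluate $\mathcal{T}_{\Delta t}$ and $\mathcal{T}_N$ for a direct FSF resonance and compare them. By definition of a direct FSF resonance the two fast wave frequencies coincide, $\omega_1 = \omega_3 =: \omega$, so $\Omega = 0$ and $\mathcal{T}_{\Delta t} = e^{i\Omega\Delta t} = 1$ as in (\ref{eq:directly_resonant_T}). I will write $W := i\omega\Delta t$ for the common fast-wave argument.

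First I would substitute the generalised Euler stability polynomial (\ref{eq:P_gen_euler}) into the numerical triadic propagator. Since $P_{\alpha}(0) = 1$, the slow wave drops out and
\begin{equation*}
\mathcal{T}_N = P_{\alpha}(W)\,P_{\alpha}(0)\,P_{\alpha}(-W) = \frac{1+(1-\alpha)W}{1-\alpha W}\cdot\frac{1-(1-\alpha)W}{1+\alpha W} = \frac{1-(1-\alpha)^2 W^2}{1-\alpha^2 W^2},
\end{equation*}
where the denominator $1-\alpha^2 W^2 = 1 + \alpha^2\omega^2(\Delta t)^2$ is strictly positive, so the expression is well defined.

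Next I would impose that the triadic error (\ref{eq:triadic_error}) vanishes, i.e. $\mathcal{T}_N = \mathcal{T}_{\Delta t} = 1$. Clearing the (positive) denominator gives $1-(1-\alpha)^2 W^2 = 1-\alpha^2 W^2$, hence $\bigl((1-\alpha)^2 - \alpha^2\bigr)W^2 = 0$. For any nontrivial resonance $W = i\omega\Delta t \neq 0$, so the bracket must vanish: $(1-\alpha)^2 = \alpha^2$, i.e. $1 - 2\alpha = 0$, whose only root in $[0,1]$ is $\alpha = \frac{1}{2}$. This establishes the ``only if'' direction. For the converse, setting $\alpha = \frac{1}{2}$ gives $(1-\alpha)^2 = \alpha^2 = \frac{1}{4}$, so the numerator and denominator of $\mathcal{T}_N$ coincide and $\mathcal{T}_N = 1 = \mathcal{T}_{\Delta t}$, so the triadic error is zero for every direct FSF resonance.

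The argument is elementary and I do not anticipate a genuine obstacle; the only point to state carefully is that, because $W \neq 0$ for a nontrivial resonance, the common factor $W^2$ may be cancelled, which is what pins $\alpha$ down to $\frac{1}{2}$ rather than leaving a $\Delta t$-dependent family. The structural fact making it work is that $P_{\alpha}$ is a fractional-linear map for which $P_{\alpha}(W)P_{\alpha}(-W)$ collapses to a ratio of even polynomials in $W$, equal to $1$ precisely when the linear dispersion error is balanced, i.e. at the trapezoidal value.
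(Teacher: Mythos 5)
Your proof is correct and takes essentially the same route as the paper's: substitute the generalised Euler stability polynomial into $\mathcal{T}_N$ for the two fast waves of a direct resonance, reduce the product to the ratio $\bigl(1+(1-\alpha)^2\omega^2(\Delta t)^2\bigr)/\bigl(1+\alpha^2\omega^2(\Delta t)^2\bigr)$, and demand equality with $\mathcal{T}_{\Delta t}=1$, which forces $(1-\alpha)^2=\alpha^2$ and hence $\alpha=0.5$. Your explicit remarks that $W\neq 0$ (so the factor $W^2$ can be cancelled) and that $(1-\alpha)^2=\alpha^2$ reduces to $1-2\alpha=0$ merely make the paper's ``only if'' step slightly more airtight.
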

\begin{proof}

Use the generalised Euler linear dispersion relation (\ref{eq:P_gen_euler}) for the individual waves in $\mathcal{T}_N$:
\begin{equation}
    \mathcal{T}_N = \left( \frac{1 + (1 - \alpha) i \omega_1 \Delta t}{1 - \alpha i \omega_1 \Delta t} \right) \left( \frac{1}{1} \right) \left( \frac{1 - (1 - \alpha) i \omega_3 \Delta t}{1 + \alpha i \omega_3 \Delta t} \right).
\end{equation}

\noindent For a directly resonant triad, $\omega_1 = \omega_3 = \omega$, and so 

\begin{equation}
    \mathcal{T}_N = \frac{1 + (1 - \alpha)^2  \omega^2 (\Delta t)^2}{1 + \alpha^2 \omega^2 (\Delta t)^2} .
\label{eq:gen_euler_proof_frac}
\end{equation}

\noindent For there to be zero triadic error, we must have that $\mathcal{T}_N = \mathcal{T}_{\Delta t} = 1$ (\ref{eq:directly_resonant_T}). This requires that $(1-\alpha)^2 = \alpha^2$ to have a matching numerator and denominator in (\ref{eq:gen_euler_proof_frac}). Thus, $1 - \alpha = \alpha \rightarrow \alpha=0.5$, which corresponds to the trapezoidal scheme.
\end{proof}

\section*{Acknowledgments}
Many thanks to James Kent and Thomas Bendall from the Met Office, for providing help with the LFRic and Gusto codes, respectively. 

\printbibliography

\end{document}